\documentclass[11pt]{amsart}


\usepackage{amssymb,amsmath}
\usepackage{mathrsfs}

\textwidth=6in \textheight=9.0in \topmargin-0.5in
\evensidemargin0.25in \oddsidemargin0.25in
\large\normalsize
\hbadness3000
\vbadness30000
\parindent=0.3in
\parskip=3pt plus 1pt minus 1pt

\newtheorem{theorem}{Theorem}
\newtheorem{lemma}[theorem]{Lemma}

\newtheorem{definition}[theorem]{Definition}
\newtheorem{example}[theorem]{Example}
\newtheorem{remark}[theorem]{Remark}

\newcommand{\R}{\ensuremath{\mathbb{R}}}
\newcommand{\Z}{\ensuremath{\mathbb{Z}}}
\newcommand{\C}{\ensuremath{\mathbb{C}}}

\newcommand{\D}{\ensuremath{\mathbb{D}}}
\newcommand{\T}{\ensuremath{\mathbb{T}}}

\DeclareMathOperator{\diag}{diag}
\DeclareMathOperator{\rank}{rank}
\DeclareMathOperator{\im}{Im}

\numberwithin{equation}{section}
\numberwithin{theorem}{section}

\small\normalsize

\begin{document}

\title[Finite Time-Varying Spectral Theory]{Spectral Theory for Second-Order Vector Equations on Finite Time-Varying Domains}

\author[anderson]{Douglas R. Anderson} 
\address{Department of Mathematics and Computer Science, Concordia College, Moorhead, MN 56562 United States\\visiting the School of Mathematics, The University of New South Wales Sydney 2052, Australia}
\email{andersod@cord.edu}

\keywords{boundary value problem; linear equations; self-adjoint operator; time scales; mixed derivatives; Sturm-Liouville theory; linear Hamiltonian systems; symplectic systems; nabla derivative.}
\subjclass[2000]{34B24, 39A10, 15A24}

\begin{abstract} 
In this study, we are concerned with spectral problems of second-order vector dynamic equations with two-point boundary value conditions and mixed derivatives, where the matrix-valued coefficient of the leading term may be singular, and the domain is non-uniform but finite. A concept of self-adjointness of the boundary conditions is introduced. The self-adjointness of the corresponding dynamic operator is discussed on a suitable admissible function space, and fundamental spectral results are obtained. The dual orthogonality of eigenfunctions is shown in a special case. Extensions to even-order Sturm-Liouville dynamic equations, linear Hamiltonian and symplectic nabla systems on general time scales are also discussed.
\end{abstract}

\maketitle\thispagestyle{empty}


\section{Introduction}\label{secintro}

Shi and Chen \cite{sc1} provide an analysis for second-order vector difference equations of the form
$$ -\nabla\left(C_n\Delta x_n\right)+B_nx_n=\lambda \omega_nx_n, \quad n\in[1,N]\cap\Z, \quad N\ge 2, $$
where the forward difference operator $\Delta x_n:=x_{n+1}-x_n$ and the backward difference operator $\nabla x_n:=x_n-x_{n-1}$ are utilized. This is one vector form related to the well-known self-adjoint scalar form, see Kelley and Peterson \cite[Chapters $6-7$]{kp},
$$ -\Delta\left(c_{n-1}\Delta x_{n-1}\right)+b_nx_{n}=\lambda\omega_n x_n. $$
Recently, however, there has been a growing interest in time-varying systems such as by Davis, Gravagne, Jackson, and Marks \cite{baylor} or Gravagne, Davis, and DaCunha \cite{baylor2}, that is discrete systems with varying step-size between domain points; these systems are sometimes also called isolated time scales. An important example in this class of problems would be quantum ($q$-difference) equations; see Simmons \cite[Chapter B.5]{simmons} and Kac and Cheung \cite{kac} for quantum calculus, and Erbe and Hilger \cite{eh} and Bohner and Peterson \cite{bp1} for more on time scales, including isolated time scales. 

Motivated by the appeal of time-varying domains and the ability to simultaneously unify and generalize recent and classic results, we introduce here a finite-dimensional analysis of second-order vector dynamic equations of the form
\begin{equation}\label{introeq}
 -(Px^\Delta)^\nabla(t)+Q(t)x(t)=\lambda \omega(t)x(t), \quad t\in[a,b]_\T, \quad b\ge \sigma(a),
\end{equation}
with boundary conditions
\begin{equation}\label{qbc}
 R\left(\begin{smallmatrix} -x^\rho(a) \\ x(b) \end{smallmatrix}\right)+S\left(\begin{smallmatrix} P^\rho(a)x^{\nabla}(a) \\ P(b)x^{\Delta}(b) \end{smallmatrix}\right)=0.
\end{equation}
A solution $x$ of \eqref{introeq} is defined on $[\rho(a),\sigma(b)]_\T$. Here $\T$ is a (finite) isolated time scale, the discrete interval is given by
$$ [a,b]_\T:=\{a,\sigma(a),\sigma^2(a),\cdots,\rho(b),b\}, \quad b=\sigma^{N-1}(a), $$ 
the $d\times d$ matrices $P(t)$ for $t\in[\rho(a),b]_\T$, $Q(t)$ and $\omega(t)$ for $t\in[a,b]_\T$, are all Hermitian with $P^\rho(a)$ and $P(b)$ invertible, $\omega(t)>0$ (positive definite) for $t\in[a,b]_\T$, and $R$ and $S$ in \eqref{qbc} are $2d\times 2d$ matrices with $\rank(R,S)=2d$. The differential operators in \eqref{introeq} are given, respectively, by 
$$ x^\Delta(t) = \frac{x^\sigma(t)-x(t)}{\mu_\sigma(t)} \quad\text{and}\quad x^\nabla(t) = \frac{x(t)-x^\rho(t)}{\mu_\rho(t)}=x^{\Delta\rho}(t), $$
where time-varying step-sizes are given by $\mu_\sigma(t):=\sigma(t)-t>0$, $\mu_\rho(t):=t-\rho(t)>0$, with $\sigma(t)$ the immediate domain point to the right of $t$, $\rho(t)$ the immediate domain point to the left of $t$, and where compositions are often denoted $f(\alpha(t))=f^\alpha(t)$. Note that the coefficient matrix $P(t)$ in \eqref{introeq} may be singular for $t\in[a,\rho(b)]_\T$. Moreover, the second-order dynamic operator given in \eqref{introeq} may not be formally self-adjoint in the space of vector series of the form 
$$ \left\{x(\sigma^n(a))\right\}_{n=-1}^{N}, \quad \sigma^{-1}(a):=\rho(a), \quad \sigma^0(a):=a, \quad \sigma^n(a):=\sigma(\sigma^{n-1}(a)), $$ 
subject to the boundary conditions in \eqref{qbc} unless those conditions themselves are self-adjoint in some sense to be defined. On the other hand, even if the boundary conditions \eqref{qbc} are self-adjoint, the second-order dynamic operator may still not be self-adjoint. As in the difference equations case \cite{sc1}, self-adjointness depends on whether the vectors at the boundaries, namely $x^\rho(a)$ and $x^\sigma(b)$, can be solved from \eqref{qbc} in terms of $x(a)$ and $x(b)$. Indeed if this is the case, we will call \eqref{qbc} proper self-adjoint boundary conditions. If \eqref{qbc} happen to be improper, we can find a smaller space where the second-order dynamic operator is self-adjoint. Once this is sorted out, we will give the appropriate analysis of the eigenvalue problem for \eqref{introeq} and \eqref{qbc}.

The particular appeal of \eqref{introeq} is that it is still a discrete problem, but with non-constant step-size between
domain points. As
$$ \lim_{\mu_\sigma\rightarrow 0}x^\Delta(t) = \lim_{\mu_\rho\rightarrow 0}x^\nabla(t)=x'(t), \quad '=\frac{d}{dt}, $$
for differentiable functions $x$, these dynamic results serve as an alternative discrete analog to the differential equations case. There are many papers on the spectral theory for difference equations, including recent papers such as Ji and Yang \cite{jy1,jy2}, Shi \cite{shi1,shi2}, Shi and Chen \cite{sc1,sc2}, Shi and Wu \cite{sw}, Shi and Yan \cite{sy}, Sun and Shi \cite{ss}, Sun, Shi, and Chen \cite{ssc}, Wang and Shi \cite{ws}, and Bohner,  Do\v{s}l\'{y}, and Kratz \cite{bdk}, but none on dynamic vector equations. Thus this work will continue and extend the discussion of self-adjoint equations on time scales found in \cite{ah,agh,dr,gus,messer}. As in the uniformly-discrete case \cite{sc1}, we use mixed derivatives, as $-\nabla$ is a natural adjoint of $\Delta$. To emphasis the dimensional analysis we focus on isolated time scales, but a similar treatment can be made on general time scales.

The analysis of \eqref{introeq} and its solutions will unfold as follows, largely motivated by Shi and Chen \cite{sc1}. In Section 2 we will give a definition of the self-adjointness of the boundary conditions in \eqref{qbc}. Section 3 contains the introduction of a suitable admissible function space in which the corresponding dynamic operator is self-adjoint, followed in Section 4 by some fundamental spectral results. In the event of proper self-adjoint boundary conditions, the dual orthogonality of eigenfunctions will be given. In Section 5 we discuss the possibility of extending these results to even-order Sturm-Liouville equations, linear Hamiltonian and symplectic nabla systems on time scales.


\section{Self-Adjoint Boundary Conditions}\label{secselfadj}

In this section we are concerned with the second-order vector dynamic operator $\ell$ given by
\begin{equation}\label{saeq}
  \ell x(t)=\omega^{-1}(t)\left[-(Px^\Delta)^\nabla(t)+Q(t)x(t)\right], \quad t\in[a,b]_\T,
\end{equation}
where we will write $x\in\mathscr{R}$ if $\left\{x(\sigma^n(a))\right\}_{n=-1}^{N}$ satisfies the boundary conditions given in \eqref{qbc}. If we set
$$ \ell[\rho(a),\sigma(b)]:=\left\{x=\left\{x(\sigma^n(a))\right\}_{n=-1}^{N}:x(t)\in\C^d, t\in[\rho(a),\sigma(b)])_\T\right\}, $$
then $\dim \ell[\rho(a),\sigma(b)]=d(N+2)$. Define the (weighted) inner product to be given by
\begin{equation}\label{ip}
 \big\langle x,y \big\rangle = \sum_{t\in[a,b]_\T} y^*(t)\omega(t)x(t)\mu_\rho(t) \quad\text{for}\quad x,y\in \ell[\rho(a),\sigma(b)],
\end{equation}
where $\omega(t)$ is as in \eqref{introeq}, $\mu_\rho(t)=t-\rho$ is the left-graininess at $t$, and $y^*(t)$ denotes the complex conjugate transpose of $y(t)$. Using standard notation, $x\perp y$ will mean $\langle x,y \rangle=0$. 


\begin{theorem}[Lagrange Identity]\label{thm23}
Assume the $d\times d$ matrices $P(t)$ for $t\in[\rho(a),b]_\T$, $Q(t)$ and $\omega(t)$ for $t\in[a,b]_\T$, are all Hermitian with $P^\rho(a)$ and $P(b)$ invertible, and $\omega(t)>0$ for $t\in[a,b]_\T$.  Then for $x,y\in \ell[\rho(a),\sigma(b)]$ we have that
\begin{equation}\label{eq23}
 \big\langle \ell  x,y \big\rangle - \big\langle x,\ell  y \big\rangle = \left[(Py^\Delta)^*x-y^*Px^\Delta\right](b) -\left[(Py^\Delta)^*x-y^*Px^\Delta\right]^\rho(a),
\end{equation}
where the inner product is given in \eqref{ip}.
\end{theorem}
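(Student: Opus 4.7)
The plan is to compute the difference $\langle\ell x,y\rangle-\langle x,\ell y\rangle$ by direct substitution, apply a discrete summation-by-parts to the two resulting nabla sums, use Hermiticity of $P$ to cancel the interior terms, and then reconcile the leftover boundary expression at $t=a$ with the claimed form at $\rho(a)$.

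First, since the factor $\omega^{-1}$ in $\ell$ cancels the weight $\omega$ in the inner product \eqref{ip}, and since $Q(t)^*=Q(t)$, the $Q$-contributions drop out and one obtains
$$\langle\ell x,y\rangle-\langle x,\ell y\rangle=\sum_{t\in[a,b]_\T}\Bigl[-y^*(t)(Px^\Delta)^\nabla(t)+\bigl((Py^\Delta)^\nabla\bigr)^*(t)\,x(t)\Bigr]\mu_\rho(t).$$

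Next I would apply an Abel-type summation by parts to each piece separately. Using the basic identity $f^\nabla(t)\mu_\rho(t)=f(t)-f^\rho(t)$ and a telescoping rearrangement (being careful that the sum starts at $t=a$ rather than at $\sigma(a)$) gives
$$\sum_{t\in[a,b]_\T}y^*(t)(Px^\Delta)^\nabla(t)\mu_\rho(t)=y^*(b)(Px^\Delta)(b)-y^*(a)(Px^\Delta)^\rho(a)-\sum_{t\in(a,b]_\T}(y^\nabla)^*(t)(Px^\Delta)^\rho(t)\mu_\rho(t),$$
together with an analogous identity for the companion sum obtained by swapping $x\leftrightarrow y$ and taking conjugate transposes. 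The identity $x^{\Delta\rho}=x^\nabla$ gives $(Px^\Delta)^\rho=P^\rho x^\nabla$, and Hermiticity $(P^\rho)^*=P^\rho$ shows that both interior sums collapse to $\sum_{(a,b]_\T}(y^\nabla)^*P^\rho x^\nabla\mu_\rho$; hence they cancel in the combination above, leaving only the four boundary terms
$$\bigl[(Py^\Delta)^*(b)x(b)-y^*(b)(Px^\Delta)(b)\bigr]+\bigl[y^*(a)(Px^\Delta)^\rho(a)-(Py^\Delta)^{*\rho}(a)x(a)\bigr].$$

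The $b$-endpoint bracket already matches the right-hand side of \eqref{eq23}. The step I expect to be the main bookkeeping obstacle is reconciling the left bracket with the claimed form $-[(Py^\Delta)^{*\rho}(a)x^\rho(a)-y^{*\rho}(a)(Px^\Delta)^\rho(a)]$, that is, replacing the values at $a$ by their values at $\rho(a)$. Writing $x(a)-x^\rho(a)=\mu_\rho(a)x^\nabla(a)$ together with the conjugate-transposed analogue for $y^*$, and invoking Hermiticity of $P^\rho(a)$ once more, the discrepancy between the two forms is
$$\bigl(y^*(a)-y^{*\rho}(a)\bigr)(Px^\Delta)^\rho(a)-(Py^\Delta)^{*\rho}(a)\bigl(x(a)-x^\rho(a)\bigr),$$
and each of the two products in this expression evaluates to $\mu_\rho(a)(y^\nabla)^*(a)P^\rho(a)x^\nabla(a)$, so the discrepancy vanishes and \eqref{eq23} follows.
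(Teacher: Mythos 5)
Your proof is correct and follows essentially the same route as the paper's: summation by parts on the two nabla sums, cancellation of the interior terms via $(Px^\Delta)^\rho=P^\rho x^\nabla$ and the Hermiticity of $P^\rho$, leaving only boundary contributions. The paper organizes the second half slightly more slickly---after one summation by parts it recombines the two remaining sums into the telescoping sum $\sum\bigl[(Py^\Delta)^*x\bigr]^\nabla\mu_\rho$, which places the left boundary term directly at $\rho(a)$ and so avoids your final reconciliation step at $t=a$---but this is a bookkeeping difference, not a different argument.
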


\begin{proof}
Let $x,y\in \ell[\rho(a),\sigma(b)]$. Using summation by parts \cite[Theorem 2.7]{ag} we have
\begin{eqnarray*}
\langle \ell  x,y \rangle-\langle x,\ell  y \rangle 
 &=& \sum_{t\in[a,b]_\T} y^*(t)\omega(t)\ell  x(t)\mu_\rho(t) - \sum_{t\in[a,b]_\T} (\ell y)^*(t)\omega(t)x(t)\mu_\rho(t) \\
 &=& -\sum_{t\in[a,b]_\T} y^*(t)\left((Px^\Delta)^\nabla-Qx\right)(t)\mu_\rho(t) \\
 & & + \sum_{t\in[a,b]_\T} \left((Py^\Delta)^\nabla-Qy\right)^*(t)x(t)\mu_\rho(t) \\
 &=& -\sum_{t\in[a,b]_\T} y^*(t)(Px^\Delta)^\nabla(t)\mu_\rho(t) + \sum_{t\in[a,b]_\T} (Py^\Delta)^{\nabla*}(t)x(t)\mu_\rho(t) \\ 
 &=& -\left[y^*Px^\Delta\right](t)\Big|^{b}_{\rho(a)} + \sum_{t\in[a,b]_\T} (Py^\Delta)^{*\rho}(t)x^\nabla(t)\mu_\rho(t) \\ 
 & & + \sum_{t\in[a,b]_\T} (Py^{\Delta})^{\nabla*}(t)x(t)\mu_\rho(t) \\
 &=& -\left[y^*Px^\Delta\right](t)\Big|^{b}_{\rho(a)} + \left[(Py^\Delta)^*x\right](t)\Big|_{\rho(a)}^{b} \\
 &=& \left[(Py^\Delta)^*x-y^*Px^\Delta\right](t)\Big|^{b}_{\rho(a)}
\end{eqnarray*}
and the result follows.
\end{proof}

The above theorem, Theorem $\ref{thm23}$, is also valid on general time scales; see Anderson and Buchholz \cite[Theorem 2.13]{ab}.
\begin{definition}\label{sabc}
The boundary conditions given in \eqref{qbc} are called self-adjoint iff
\begin{equation}\label{kr216}
 \left[(Py^\Delta)^*x-y^*Px^\Delta\right](t)\Big|^{b}_{\rho(a)} = 0
\end{equation}
for $x,y\in\mathscr{R}$.
\end{definition}

The next theorem then follows easily from the definition and Theorem \ref{thm23}.


\begin{theorem}\label{thm25}
Assume the $d\times d$ matrices $P(t)$ for $t\in[\rho(a),b]_\T$, $Q(t)$ and $\omega(t)$ for $t\in[a,b]_\T$, are all Hermitian with $P^\rho(a)$ and $P(b)$ invertible, $\omega(t)>0$ for $t\in[a,b]_\T$, and $R$ and $S$ in \eqref{qbc} are $2d\times 2d$ matrices with $\rank(R,S)=2d$. If the boundary conditions in \eqref{qbc} are self-adjoint, then
$$ \langle \ell  x,y \rangle=\langle x,\ell  y \rangle $$
for $x,y\in \ell[\rho(a),\sigma(b)]$ with $x,y\in\mathscr{R}$.
\end{theorem}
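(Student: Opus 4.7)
The statement is essentially an immediate consequence of the two preceding results, so the plan is short. The strategy is to apply the Lagrange identity of Theorem \ref{thm23} to the pair $x,y \in \ell[\rho(a),\sigma(b)]$, and then observe that the hypotheses $x,y \in \mathscr{R}$ together with the assumed self-adjointness of the boundary conditions force the right-hand side of \eqref{eq23} to vanish.

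Concretely, I would first write
$$ \big\langle \ell x, y \big\rangle - \big\langle x, \ell y \big\rangle = \left[(Py^\Delta)^*x - y^*Px^\Delta\right](t)\Big|^{b}_{\rho(a)}, $$
invoking Theorem \ref{thm23}, whose hypotheses on $P$, $Q$, $\omega$ are exactly those assumed here. Next, since $x,y \in \mathscr{R}$ means that the boundary sequences of $x$ and $y$ satisfy \eqref{qbc}, Definition \ref{sabc} applies and gives
$$ \left[(Py^\Delta)^*x - y^*Px^\Delta\right](t)\Big|^{b}_{\rho(a)} = 0. $$
Combining these two lines yields $\langle \ell x, y \rangle = \langle x, \ell y \rangle$, which is the desired conclusion.

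There is no genuine obstacle in the argument as stated; the real work has already been done in proving the Lagrange identity and in formulating Definition \ref{sabc} so that the boundary term in \eqref{eq23} matches the quantity in \eqref{kr216} verbatim. The only thing worth double-checking is that the boundary evaluation appearing in Theorem \ref{thm23} (where the endpoints are $b$ and $\rho(a)$, and the terms involve $x^\Delta$, $y^\Delta$, and the values $x(b)$, $x^\rho(a)$, etc.) is literally the same expression appearing in \eqref{kr216}, so that no hidden compatibility issue arises between the rank condition on $(R,S)$ and the pointwise evaluation. Assuming, as the excerpt does, that these are identified without further qualification, the proof reduces to citing Theorem \ref{thm23} and Definition \ref{sabc} in sequence.
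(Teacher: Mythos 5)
Your proof is correct and matches the paper's intended argument exactly: the paper states that Theorem \ref{thm25} ``follows easily from the definition and Theorem \ref{thm23},'' which is precisely your two-step chain of the Lagrange identity followed by Definition \ref{sabc}. Nothing further is needed.
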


\begin{lemma}
Assume that $R$ and $S$ are $2d\times 2d$ matrices with $\rank(R,S)=2d$. Then 
\begin{equation}
 SR^*=RS^*
\end{equation}
if and only if the boundary conditions \eqref{qbc} are self-adjoint.
\end{lemma}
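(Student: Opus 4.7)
My plan is to translate self-adjointness of the boundary conditions into an algebraic statement on the $2d$-dimensional subspace $\ker[R,S]\subset\C^{4d}$, and then match that statement against $SR^*=RS^*$.

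For the setup, given $x\in\ell[\rho(a),\sigma(b)]$ I introduce
$$ u_x := \vecc{-x^\rho(a)}{x(b)}, \qquad v_x := \vecc{P^\rho(a)x^\nabla(a)}{P(b)x^\Delta(b)}, $$
and likewise $u_y,v_y$. Then $x\in\mathscr{R}$ is equivalent to $\vecc{u_x}{v_x}\in\ker[R,S]$, and since $P^\rho(a),P(b)$ are invertible the map $x\mapsto\vecc{u_x}{v_x}$ sends $\ell[\rho(a),\sigma(b)]$ onto $\C^{4d}$, so the set $\{\vecc{u_x}{v_x}:x\in\mathscr{R}\}$ equals all of $\ker[R,S]$. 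Using the Hermitian property of $P^\rho(a),P(b)$ together with $x^\nabla=x^{\Delta\rho}$, a direct expansion gives
$$ \bigl[(Py^\Delta)^*x-y^*Px^\Delta\bigr](t)\Big|_{\rho(a)}^{b} = v_y^*u_x - u_y^*v_x = \begin{bmatrix} u_y^* & v_y^* \end{bmatrix} J \vecc{u_x}{v_x}, $$
where $J := \mat{0}{-I}{I}{0}$ is $4d\times 4d$. Thus \eqref{qbc} is self-adjoint iff $J\ker[R,S]\subseteq\ker[R,S]^{\perp}$. Because $\rank(R,S)=2d$, both $\ker[R,S]$ and $\ker[R,S]^{\perp}=\im\vecc{R^*}{S^*}$ are $2d$-dimensional, so whenever the inclusion holds it is in fact an equality $J\ker[R,S]=\im\vecc{R^*}{S^*}$.

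With this reduction the lemma is a short linear-algebra check. For $(\Leftarrow)$, if $SR^*=RS^*$ then $[R,S]\vecc{S^*}{-R^*}=RS^*-SR^*=0$; the matrix $\vecc{S^*}{-R^*}$ has rank $2d$ (obtained from the rank-$2d$ matrix $\vecc{R^*}{S^*}$ by the invertible map $J^{-1}$), so it spans $\ker[R,S]$, and applying $J$ gives $J\ker[R,S]=\im\vecc{R^*}{S^*}$, the self-adjointness condition. For $(\Rightarrow)$, I apply $J^{-1}$ to the equality $J\ker[R,S]=\im\vecc{R^*}{S^*}$ to obtain $\ker[R,S]=\im\vecc{S^*}{-R^*}$, and then $[R,S]\vecc{S^*}{-R^*}=0$ reads precisely $RS^*=SR^*$.

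The main obstacle is really just the first step, recasting the boundary term in \eqref{kr216} as the symplectic pairing $v_y^*u_x-u_y^*v_x$. One must keep straight which matrix $P$ sits at which endpoint (invoking the Hermitian property at $b$ and at $\rho(a)$), use $x^\nabla=x^{\Delta\rho}$ to collapse $(Py^\Delta)^\rho(a)$ to $P^\rho(a)y^\nabla(a)$, and honor the built-in minus sign on the top block of $u_x$. Once that identification is in place, the equivalence with $SR^*=RS^*$ is immediate from the elementary subspace considerations above.
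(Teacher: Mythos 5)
Your proof is correct, and it rests on the same two pillars as the paper's argument (which follows Kratz, Proposition 2.1.1): the identification of the boundary form in \eqref{kr216} with the pairing $v_y^*u_x-u_y^*v_x$ of the boundary data vectors, and the observation that, under $\rank(R,S)=2d$, the condition $SR^*=RS^*$ is exactly what makes $\im\left(\begin{smallmatrix} S^* \\ -R^* \end{smallmatrix}\right)$ coincide with $\ker(R,S)$. The difference is in packaging: the paper parametrizes $\mathscr{R}$ by a $4d\times 2d$ matrix $\mathcal{K}=\left(\begin{smallmatrix} B \\ C \end{smallmatrix}\right)$ and a vector $\eta\in\C^{2d}$, reduces self-adjointness to $B^*C=C^*B$, proves only the implication $SR^*=RS^*\Rightarrow$ self-adjoint by choosing $\mathcal{K}=\left(\begin{smallmatrix} S^* \\ -R^* \end{smallmatrix}\right)$, and refers to Kratz for the converse; you work directly with the subspace $\ker(R,S)\subset\C^{4d}$ and the form $J$, and the dimension count ($J$ invertible, both $\ker(R,S)$ and $\im\left(\begin{smallmatrix} R^* \\ S^* \end{smallmatrix}\right)$ of dimension $2d$) lets you carry out both implications explicitly. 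Two features of your write-up add completeness: you verify that every vector of $\ker(R,S)$ is realized as boundary data of some $x\in\mathscr{R}$ (via invertibility of $P^\rho(a)$ and $P(b)$), which is what the ``only if'' direction really needs and is left implicit in the paper, and you do not outsource the converse. One small remark: the Hermitian property of $P^\rho(a)$ and $P(b)$ is not actually used in your expansion of the boundary term, since the blocks enter intact as $(Py^\Delta)^*$; only $x^\nabla(a)=x^{\Delta\rho}(a)$ and the invertibility of these matrices matter there.
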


\begin{proof}
We follow Kratz \cite[Proposition 2.1.1]{kratz}. Let $x\in \ell[\rho(a),\sigma(b)]$ with $x\in\mathscr{R}$. By the definition of $\mathscr{R}$, $x$  satisfies the boundary conditions \eqref{qbc}. Let $\mathcal{K}=\left(\begin{smallmatrix} B \\ C \end{smallmatrix}\right)$ be any $4d\times 2d$ matrix with $\im \mathcal{K}=\ker(B,C)$, so that $SB+RC=0$, $\rank\mathcal{K}=2d$, and so that $x\in\mathscr{R}$ if and only if $\left(\begin{smallmatrix} x^\rho(a) \\ -x(b) \end{smallmatrix}\right)=B\eta$, $\left(\begin{smallmatrix} P^\rho(a)x^\nabla(a) \\ P(b)x^{\Delta}(b) \end{smallmatrix}\right)=C\eta$ for some $\eta\in\C^{2d}$. This results in \eqref{kr216} holding for all $x,y\in\mathscr{R}$ if and only if $\eta_1^*[B^*C-C^*B]\eta_2=0$ for all $\eta_1,\eta_2\in\C^{2d}$, that is to say $B^*C=C^*B$.

First, assume that $RS^*=SR^*$. Since $\rank(R,S)=2d$, we see that $\im\left(\begin{smallmatrix} S^* \\ -R^* \end{smallmatrix}\right)=\ker(R,S)$. Consequently, the matrix $\mathcal{K}$ given above can be taken to be $\left(\begin{smallmatrix} S^* \\ -R^* \end{smallmatrix}\right)$; by the Hermitian assumption on $RS^*$ we see that $B^*C$ is Hermitian, and thus the boundary conditions \eqref{qbc} are self-adjoint by Definition \ref{sabc}. The rest of the proof is identical to \cite[Proposition 2.1.1]{kratz} and is omitted.
\end{proof}

\begin{lemma}\label{lem25}
Assume the $d\times d$ matrices $P(t)$ for $t\in[\rho(a),b]_\T$, $Q(t)$ and $\omega(t)$ for $t\in[a,b]_\T$, are all Hermitian with $P^\rho(a)$ and $P(b)$ invertible, $\omega(t)>0$ for $t\in[a,b]_\T$, and $R$ and $S$ in \eqref{qbc} are $2d\times 2d$ matrices with $\rank(R,S)=2d$. If the boundary conditions in \eqref{qbc} are self-adjoint, then $x\in\mathscr{R}$ if and only if there exists a unique vector $\eta\in\C^{2d}$ such that
\begin{equation}\label{sc28}
 \left(\begin{smallmatrix} -x^\rho(a) \\ x(b) \end{smallmatrix}\right) = -S^*\eta, \qquad \left(\begin{smallmatrix} P^\rho(a)x^\nabla(a) \\ P(b)x^{\Delta}(b) \end{smallmatrix}\right) = R^*\eta.
\end{equation}
\end{lemma}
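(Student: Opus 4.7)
The plan is to show that the affine/linear data $\bigl(\bigl(\begin{smallmatrix} -x^\rho(a) \\ x(b) \end{smallmatrix}\bigr), \bigl(\begin{smallmatrix} P^\rho(a)x^\nabla(a) \\ P(b)x^\Delta(b) \end{smallmatrix}\bigr)\bigr)$ of a vector $x\in\mathscr{R}$ is parametrized bijectively by $\eta\in\C^{2d}$ via the columns of $\bigl(\begin{smallmatrix} -S^* \\ R^* \end{smallmatrix}\bigr)$. This exactly translates the kernel description of $\mathscr{R}$ already used in the preceding lemma into the form \eqref{sc28}.

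First I would recall from (the proof of) the previous lemma that $x\in\mathscr{R}$ is equivalent to the block vector $\bigl(\begin{smallmatrix} -x^\rho(a) \\ x(b) \\ P^\rho(a)x^\nabla(a) \\ P(b)x^\Delta(b) \end{smallmatrix}\bigr)$ lying in $\ker(R,S)$, which has dimension $2d$ because $\rank(R,S)=2d$. Next I would verify that the linear map
$$T\colon\C^{2d}\to\C^{4d}, \qquad T\eta=\begin{pmatrix}-S^*\eta\\ R^*\eta\end{pmatrix},$$
takes values in $\ker(R,S)$: indeed $(R,S)\,T\eta = -RS^*\eta+SR^*\eta=0$ by the hypothesis that the boundary conditions are self-adjoint, which by the previous lemma gives $SR^*=RS^*$.

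Then I would establish that $T$ is injective. Suppose $T\eta=0$; then $S^*\eta=0$ and $R^*\eta=0$, hence $\eta^*(R,S)=0$. Because $(R,S)$ is a $2d\times 4d$ matrix of rank $2d$, its rows are linearly independent, which forces $\eta=0$. Since $\dim\C^{2d}=2d=\dim\ker(R,S)$ and $T$ is an injective linear map into $\ker(R,S)$, it is a bijection onto $\ker(R,S)$. Putting these pieces together: $x\in\mathscr{R}$ iff its boundary data lies in $\ker(R,S)$ iff that data equals $T\eta$ for some (necessarily unique, by injectivity of $T$) $\eta\in\C^{2d}$, which is precisely \eqref{sc28}.

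The only step that requires any care is the injectivity/surjectivity argument, since one must use the rank hypothesis on $(R,S)$ in the right way; everything else is bookkeeping on top of the kernel characterization inherited from the preceding lemma and the identity $SR^*=RS^*$.
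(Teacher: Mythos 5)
Your proof is correct and is exactly the standard linear-algebra argument: the paper itself does not write out a proof of this lemma but only cites Kratz (Proposition 2.1.2) and Shi--Chen (Lemma 2.2), and the argument in those references is precisely what you give. The three ingredients --- the kernel characterization of $\mathscr{R}$, the identity $SR^*=RS^*$ from the preceding lemma to see that $T$ maps into $\ker(R,S)$, and the rank/dimension count ($\dim\ker(R,S)=2d$ plus injectivity of $T$ from the full row rank of $(R,S)$) giving bijectivity and hence existence and uniqueness of $\eta$ --- are all used correctly.
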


\begin{proof}
See Kratz \cite[Proposition 2.1.2]{kratz} and Shi and Chen \cite[Lemma 2.2]{sc1}.
\end{proof}


\section{Admissible Function Space and Self-Adjointness of the Dynamic Operator}

Throughout this section we assume the $d\times d$ matrices $P(t)$ for $t\in[\rho(a),b]_\T$, $Q(t)$ and $\omega(t)$ for $t\in[a,b]_\T$, are all Hermitian with $P^\rho(a)$ and $P(b)$ invertible, $\omega(t)>0$ for $t\in[a,b]_\T$, the matrices $R$ and $S$ in \eqref{qbc} are $2d\times 2d$ matrices with $\rank(R,S)=2d$, and the boundary conditions in \eqref{qbc} are self-adjoint. We begin the construction of an admissible function space to establish the self-adjointness of a dynamic operator related to \eqref{introeq}, following the development in the uniformly discrete case presented in \cite{sc1}.

Let $R=(R_1,R_2)$ and $S=(S_1,S_2)$, where $R_k$ and $S_k$ are $2d\times d$ matrices, $k=1,2$. By Lemma \ref{lem25}, $x\in\mathscr{R}$ if
and only if there exists a unique vector $\eta\in\C^{2d}$ such that
\begin{eqnarray}
 x^\rho(a)=S_1^*\eta, \quad x(a)=(S_1^*+\mu_\rho(a)P^{\rho-1}(a)R_1^*)\eta, \nonumber \\
 x(b)=-S_2^*\eta, \quad x^\sigma(b)=(-S_2^*+\mu_\sigma(b)P^{-1}(b)R_2^*)\eta, \label{sc31}
\end{eqnarray}
where $P^{\rho-1}=(P^\rho)^{-1}$. Rewrite the boundary conditions \eqref{qbc} as
\begin{equation}\label{sc32}
 \Gamma\left(\begin{smallmatrix} P^\rho(a) & 0 \\ 0 & -P(b)\end{smallmatrix}\right) 
 \left(\begin{smallmatrix} x^\rho(a) \\ x^\sigma(b) \end{smallmatrix}\right) 
 = \left(\frac{1}{\mu_\rho(a)}S_1P^\rho(a), R_2-\frac{1}{\mu_\sigma(b)}S_2P(b)\right)\left(\begin{smallmatrix} x(a) \\ x(b) \end{smallmatrix}\right),
\end{equation}
where the coefficient matrix $\Gamma$ is given by
$$ \Gamma=\left(R_1P^{\rho-1}(a)+\frac{1}{\mu_\rho(a)}S_1,\frac{1}{\mu_\sigma(b)}S_2\right). $$
Set 
$$ r=\rank \left(R_1+\frac{1}{\mu_\rho(a)}S_1P^{\rho}(a),\frac{1}{\mu_\sigma(b)}S_2\right)\in\{0,1,\cdots,2d\}, $$
where this value $r$ will play a key role in the development to follow. If the matrix $\Gamma$ is invertible, in other words if $r=2d$, then the boundary conditions \eqref{qbc} are proper, since \eqref{sc32} is solvable for $x^\rho(a)$ and $x^\sigma(b)$, respectively. We then see that, as $x^\rho(a)$ and $x^\sigma(b)$ are not weighted with respect to the weight function $\omega$, the $dN$-dimensional space $ L[\rho(a),\sigma(b)]:=\left\{x\in \ell[\rho(a),\sigma(b)]:x\in\mathscr{R}\right\}$ might serve as a suitable admissible space. If $r<2d$, however, then from \eqref{sc32} we have that the $2d$ components of $x(a)$ and $x(b)$ are themselves linked by $2d-r$ relations. From \eqref{introeq} we see that $x(a)$ and $x(b)$ are weighted by $\omega(a)$ and $\omega(b)$ in the first and last vector equations. In the two vector equations, via some transformation, only $r$ scalar equations are really weighted, while the remaining $2d-r$ ones involve $x^\rho(a)$, $x(a)$, $x^\sigma(a)$, $x^\rho(b)$, $x(b)$, and $x^\sigma(b)$ but not $\lambda$; consequently they can be viewed as extra conditions for the admissible functions. We will now show this as follows. 
 
Using standard matrix theory \cite{hj}, there exist $2d\times 2d$-unitary matrices $U$ and $V$ such that
\begin{equation}\label{sc33}
 U^*\Gamma V = \diag\{0,M\},
\end{equation}
where $M$ is an $r\times r$ matrix with $\rank M=r$. Let $U=(U_1,U_2)$ and $V=(V_1,V_2)$, where $U_1$ and $V_1$ are $2d\times(2d-r)$ matrices, $U_2$ and $V_2$ are $2d\times r$ matrices. Since $V$ is unitary, from \eqref{sc33} we have that
\begin{equation}\label{sc34}
 V_1^*V_1=I_{2d-r}, \quad V_1^*V_2=0_{(2d-r)\times r}, \quad V_2^*V_2=I_r,
\end{equation}
\begin{equation}\label{sc35}
 U_1^*\Gamma=0_{(2d-r)\times 2d}, \quad V_2=\Gamma^*U_2M^{*-1}.
\end{equation}
Using \eqref{sc31} and \eqref{sc33}, we see that
\begin{equation}\label{sc36}
 \left(\begin{smallmatrix} x(a) \\ -x(b) \end{smallmatrix}\right)=V\diag\{0,M^*\}U^*\eta=(0_{2d\times(2d-r)},V_2M^*)U^*\eta,
\end{equation}
so that by \eqref{sc34} we have
\begin{equation}\label{sc37}
 V_1^*\left(x^T(a), -x^T(b)\right)=0, \qquad x^T=\text{transpose of $x$}.
\end{equation}
The first and last relations in \eqref{introeq} can be written as
$$ \lambda \begin{pmatrix} x(a) \\ -x(b)\end{pmatrix} 
= \begin{pmatrix} \omega^{-1}(a) & 0 \\ 0 & \omega^{-1}(b) \end{pmatrix} 
\begin{pmatrix} \frac{-1}{\left(\mu_\rho(a)\right)^2}P^\rho(a)x^\rho(a) + \tilde{P}(a)x(a)-\frac{1}{\mu_\rho(a)\mu_\sigma(a)}P(a)x^\sigma(a) \\ \frac{1}{\left(\mu_\rho(b)\right)^2}P^\rho(b)x^\rho(b) - \tilde{P}(b)x(b)+\frac{1}{\mu_\rho(b)\mu_\sigma(b)}P(b)x^\sigma(b) \end{pmatrix}, $$
where we have taken 
$$ \tilde{P}(t)=Q(t)+\frac{1}{\mu_\rho(t)\mu_\sigma(t)}P(t)+\frac{1}{\left(\mu_\rho(t)\right)^2}P^\rho(t). $$
From \eqref{sc37} we obtain the $2d-r$ scalar equations
\begin{equation}\label{sc38}
 V_1^*\begin{pmatrix} \omega^{-1}(a) & 0 \\ 0 & \omega^{-1}(b) \end{pmatrix} \begin{pmatrix} \frac{-1}{\left(\mu_\rho(a)\right)^2}P^\rho(a)x^\rho(a) + \tilde{P}(a)x(a)-\frac{1}{\mu_\rho(a)\mu_\sigma(a)}P(a)x^\sigma(a) \\ \frac{1}{\left(\mu_\rho(b)\right)^2}P^\rho(b)x^\rho(b) - \tilde{P}(b)x(b)+\frac{1}{\mu_\rho(b)\mu_\sigma(b)}P(b)x^\sigma(b) \end{pmatrix}=0.
\end{equation}
If $x$ satisfies \eqref{sc38} we will write $x\in\mathscr{A}$. Noting the absence of $\lambda$ from \eqref{sc38}, we can think of \eqref{sc38} as additional conditions together with the boundary conditions \eqref{qbc}. Thus, we define the admissible function space
$$ L^2_\omega[\rho(a),\sigma(b)]:=\{x\in\ell[\rho(a),\sigma(b)]: x\in\mathscr{R}\cap\mathscr{A}\}. $$
From \eqref{sc34} we see that $x\in\mathscr{A}$ if and only if $x$ satisfies
\begin{equation}\label{sc39}
  \begin{pmatrix} \frac{-1}{\left(\mu_\rho(a)\right)^2}P^\rho(a)x^\rho(a) + \tilde{P}(a)x(a)-\frac{1}{\mu_\rho(a)\mu_\sigma(a)}P(a)x^\sigma(a) \\ \frac{1}{\left(\mu_\rho(b)\right)^2}P^\rho(b)x^\rho(b) - \tilde{P}(b)x(b)+\frac{1}{\mu_\rho(b)\mu_\sigma(b)}P(b)x^\sigma(b) \end{pmatrix} = \begin{pmatrix} \omega(a) & 0 \\ 0 & \omega(b) \end{pmatrix}V_2 \gamma
\end{equation}
for some $\gamma\in\C^r$. As $P^\rho(a)$ and $P(b)$ are invertible, \eqref{sc39} implies that
\begin{eqnarray}\label{sc310}
\begin{pmatrix} x^\rho(a) \\ x^\sigma(b)\end{pmatrix} 
&=& \begin{pmatrix} P^{\rho-1}(a)\left(\left(\mu_\rho(a)\right)^2\tilde{P}(a)x(a)-\frac{\mu_\rho(a)}{\mu_\sigma(a)}P(a)x^\sigma(a)\right) \\ P^{-1}(b)\left(-\frac{\mu_\sigma(b)}{\mu_\rho(b)}P^\rho(b)x^\rho(b)+\mu_\rho(b)\mu_\sigma(b)\tilde{P}(b)x(b)\right) \end{pmatrix} \nonumber \\
& & + \begin{pmatrix}  -\left(\mu_\rho(a)\right)^2P^{\rho-1}(a)\omega(a) & 0 \\ 0 & \mu_\rho(b)\mu_\sigma(b)P^{-1}(b)\omega(b) \end{pmatrix}V_2 \gamma.
\end{eqnarray}
Let $x\in L^2_\omega[\rho(a),\sigma(b)]$. Inserting \eqref{sc310} into boundary conditions \eqref{qbc} or \eqref{sc32} yields
\begin{eqnarray}
 \Gamma \diag \{\left(\mu_\rho(a)\right)^2\omega(a), \mu_\rho(b)\mu_\sigma(b)\omega(b)\}V_2\gamma
  &=& \Gamma \begin{pmatrix} \left(\mu_\rho(a)\right)^2\tilde{P}(a)x(a)-\frac{\mu_\rho(a)}{\mu_\sigma(a)}P(a)x^\sigma(a) \\ \frac{\mu_\sigma(b)}{\mu_\rho(b)}P^\rho(b)x^\rho(b)-\mu_\rho(b)\mu_\sigma(b)\tilde{P}(b)x(b) \end{pmatrix} \nonumber \\
 & & -\left(\frac{1}{\mu_\rho(a)}S_1P^\rho(a), R_2-\frac{1}{\mu_\sigma(b)}S_2P(b)\right)\left(\begin{smallmatrix} x(a) \\ x(b) \end{smallmatrix}\right) \\
 & & =:f\left(x(a),x^\sigma(a),x^\rho(b),x(b)\right). \label{sc311}
\end{eqnarray}
From \eqref{sc33} we have
\begin{equation}\label{sc312}
 \Gamma  \diag \{\left(\mu_\rho(a)\right)^2\omega(a), \mu_\rho(b)\mu_\sigma(b)\omega(b)\}V_2=U\left(0_{r\times(2d-r)},J^T\right)^T,
\end{equation}
where $J=MV_2^*\diag\{\left(\mu_\rho(a)\right)^2\omega(a), \mu_\rho(b)\mu_\sigma(b)\omega(b)\}V_2$ is an $r\times r$ invertible matrix, as $\rank M=\rank V_2=r$, and $\omega(a)$ and $\omega(b)$ are positive definite by assumption. Multiplying \eqref{sc311} on the left by $U^*$, we have from \eqref{sc35} (in particular $U_1^*S_2=0$) and \eqref{sc312} that
\begin{equation}\label{sc313}
 U_1^*\left(\frac{1}{\mu_\rho(a)}S_1P^\rho(a)x(a)+ R_2x(b)\right) = 0
\end{equation}
\begin{equation}\label{sc314}
 \gamma=J^{-1}U_2^*f\left(x(a),x^\sigma(a),x^\rho(b),x(b)\right).
\end{equation}
We are now in a position to get a useful characterization for a function $x$ to be a member of $L^2_\omega[\rho(a),\sigma(b)]$. As we have the necessary structure in place and in parallel with the discrete case presented by \cite{sc1}, we omit the proofs of the following key results; for more details, please see \cite{sc1}.

\begin{lemma}
We have $x\in L^2_\omega[\rho(a),\sigma(b)]$ if and only if $x$ satisfies \eqref{sc310} and \eqref{sc313} in which $\gamma$ is determined by \eqref{sc314}.
\end{lemma}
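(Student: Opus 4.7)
The plan is essentially to verify that the derivation carried out on pages leading up to the lemma is a chain of equivalences rather than just implications, and to package the two directions cleanly using the block decomposition afforded by \eqref{sc33} and the partial isometry relations \eqref{sc34}.

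For the forward direction, suppose $x\in L^2_\omega[\rho(a),\sigma(b)]$, so $x\in\mathscr{R}\cap\mathscr{A}$. The membership $x\in\mathscr{A}$ means \eqref{sc38} holds. Because $V=(V_1,V_2)$ is unitary, \eqref{sc34} says that the kernel of $V_1^*$ equals the range of $V_2$; applied to the weighted vector on the left of \eqref{sc38}, this produces \eqref{sc39} for a uniquely determined $\gamma\in\C^r$. Using the invertibility of $P^\rho(a)$ and $P(b)$, I solve \eqref{sc39} for $x^\rho(a)$ and $x^\sigma(b)$ to obtain \eqref{sc310}. To extract \eqref{sc313} and \eqref{sc314}, I then substitute \eqref{sc310} into the reformulated boundary condition \eqref{sc32}, yielding \eqref{sc311}, and multiply on the left by $U^*=(U_1,U_2)^*$. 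By \eqref{sc312} the left-hand side of \eqref{sc311} becomes $U_2 J\gamma$, so the $U_1^*$-block annihilates it; using also $U_1^*\Gamma=0$ from \eqref{sc35} (which in particular kills the $S_2 P(b)$ contribution since $U_1^*S_2=0$), the $U_1^*$-block of \eqref{sc311} collapses to exactly \eqref{sc313}. The $U_2^*$-block gives $J\gamma = U_2^* f\!\left(x(a),x^\sigma(a),x^\rho(b),x(b)\right)$, which is \eqref{sc314} after inverting $J$.

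For the converse, assume \eqref{sc310} holds with $\gamma$ given by \eqref{sc314}, and assume \eqref{sc313}. Multiplying \eqref{sc310} through by $\diag\{P^\rho(a),-P(b)\}$ and rearranging recovers \eqref{sc39}. Applying $V_1^*\,\diag\{\omega^{-1}(a),\omega^{-1}(b)\}$ to \eqref{sc39} and invoking $V_1^*V_2=0$ from \eqref{sc34} gives \eqref{sc38}, so $x\in\mathscr{A}$. To get $x\in\mathscr{R}$ I must recover \eqref{sc32}, equivalently \eqref{sc311} together with \eqref{sc310}. Since $U$ is unitary, \eqref{sc311} is equivalent to its image under $U^*$; the $U_1^*$-block of that image is exactly \eqref{sc313} (by the same computation as in the forward direction, using \eqref{sc312} and $U_1^*\Gamma=0$), and the $U_2^*$-block is $J\gamma = U_2^* f(\cdots)$, which is \eqref{sc314}. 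Both hold by hypothesis, so \eqref{sc311} holds, and combined with \eqref{sc310} this gives \eqref{sc32}. Hence $x\in\mathscr{R}$.

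The only real obstacle is the bookkeeping: one must consistently use that $V$ and $U$ are unitary (so $U_1^*U_2=0$, $U_2^*U_2=I_r$, and analogously for $V$), that \eqref{sc35} encodes both $U_1^*\Gamma=0$ and $V_2=\Gamma^*U_2M^{*-1}$, and that $\omega(a),\omega(b)$ are positive definite so that $J=MV_2^*\diag\{(\mu_\rho(a))^2\omega(a),\mu_\rho(b)\mu_\sigma(b)\omega(b)\}V_2$ is genuinely invertible. Once these ingredients are lined up the argument is a linear-algebraic equivalence in both directions, with no analytical subtlety, and it parallels \cite[Lemma 2.3]{sc1}.
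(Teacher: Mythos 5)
Your proof is correct and is essentially the argument the paper intends: the paper itself omits the proof (deferring to the discrete case in \cite{sc1}), and its derivation \eqref{sc31}--\eqref{sc314} preceding the lemma is exactly the chain of equivalences you make explicit, namely the equivalence of $x\in\mathscr{A}$ with \eqref{sc39}/\eqref{sc310} via \eqref{sc34}, and the splitting of \eqref{sc311} under multiplication by $U^*$ into \eqref{sc313} and \eqref{sc314} via \eqref{sc312} and \eqref{sc35}. The only cosmetic slip is in the converse, where recovering \eqref{sc39} from \eqref{sc310} requires multiplying by $\diag\left\{-\tfrac{1}{(\mu_\rho(a))^2}P^\rho(a),\ \tfrac{1}{\mu_\rho(b)\mu_\sigma(b)}P(b)\right\}$ rather than $\diag\{P^\rho(a),-P(b)\}$; since these are invertible rescalings it does not affect the argument.
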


\begin{lemma}
We have $m:=\dim L^2_\omega[\rho(a),\sigma(b)]=d(N-2)+r$.
\end{lemma}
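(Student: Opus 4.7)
The plan is to use the parametrization of $L^2_\omega[\rho(a),\sigma(b)]$ provided by the previous lemma and then count constraints. By that lemma, $x\in L^2_\omega[\rho(a),\sigma(b)]$ if and only if $x$ satisfies \eqref{sc310} and \eqref{sc313}, with $\gamma$ given by \eqref{sc314}. Since $\gamma$ is determined by the interior data through \eqref{sc314}, and then the boundary values $x^\rho(a)$ and $x^\sigma(b)$ are determined by \eqref{sc310}, the restriction map
$$ L^2_\omega[\rho(a), \sigma(b)] \longrightarrow (\C^d)^N, \qquad x \mapsto \bigl(x(a), x(\sigma(a)),\ldots, x(\rho(b)), x(b)\bigr), $$
is injective, with image equal to the set of interior tuples satisfying \eqref{sc313}. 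Since $b = \sigma^{N-1}(a)$, the target has dimension $dN$, so it suffices to show that \eqref{sc313} cuts out a subspace of codimension exactly $2d - r$, giving $m = dN - (2d - r) = d(N-2) + r$.

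The computation then reduces to finding the rank of the coefficient matrix of \eqref{sc313}, namely
$$ A := U_1^*\Bigl(\tfrac{1}{\mu_\rho(a)} S_1 P^\rho(a),\ R_2\Bigr). $$
I would use the relation $U_1^* \Gamma = 0$ from \eqref{sc35}, which yields $U_1^* S_2 = 0$ together with $U_1^*\bigl(R_1 P^{\rho-1}(a) + \tfrac{1}{\mu_\rho(a)} S_1\bigr) = 0$, whence $U_1^*\tfrac{1}{\mu_\rho(a)} S_1 P^\rho(a) = -U_1^* R_1$ and $U_1^* S_1 = -\mu_\rho(a)\, U_1^* R_1 P^{\rho-1}(a)$. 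These identities reveal both that $A = U_1^*(-R_1, R_2)$ and that the columns of $U_1^* S_1$ and $U_1^* S_2$ lie in the column span of $U_1^* R_1$. Consequently the column span of $A$ coincides with that of $U_1^* (R, S)$.

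The crux, and the step I expect to be the main obstacle, is to verify that $\rank U_1^* (R, S) = 2d - r$, ruling out any accidental collapse that would reduce the codimension. Two ingredients make this work. First, $\rank(R, S) = 2d$ means the columns of $(R, S)$ span all of $\C^{2d}$. Second, the unitarity of $U$ implies that the $2d - r$ columns of $U_1$ are orthonormal, so $U_1^* U_1 = I_{2d-r}$ and the map $U_1^* : \C^{2d} \to \C^{2d-r}$ is surjective. Composing, $U_1^*(R,S)$ has rank $2d - r$, hence $\rank A = 2d - r$, and \eqref{sc313} imposes precisely $2d - r$ independent linear conditions on the interior data. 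Substituting into the dimension count above yields $m = d(N-2) + r$, as claimed.
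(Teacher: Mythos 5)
Your argument is correct: using the preceding characterization lemma, restriction to the interior values $x(a),\dots,x(b)$ is a linear bijection onto the solution set of \eqref{sc313}, and your verification that the coefficient matrix $U_1^*\bigl(\tfrac{1}{\mu_\rho(a)}S_1P^\rho(a),\,R_2\bigr)$ has full row rank $2d-r$ (via $U_1^*\Gamma=0$, $\rank(R,S)=2d$, and the surjectivity of $U_1^*$) is precisely the one point that needs checking. This is essentially the counting argument the paper defers to in the discrete case of Shi and Chen \cite{sc1}, so nothing further is needed.
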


\begin{theorem}
Assume the $d\times d$ matrices $P(t)$ for $t\in[\rho(a),b]_\T$, $Q(t)$ and $\omega(t)$ for $t\in[a,b]_\T$, are all Hermitian with $P^\rho(a)$ and $P(b)$ invertible, $\omega(t)>0$ for $t\in[a,b]_\T$, and $R$ and $S$ in \eqref{qbc} are $2d\times 2d$ matrices with $\rank(R,S)=2d$. If the boundary conditions in \eqref{qbc} are self-adjoint, then $L^2_\omega[\rho(a),\sigma(b)]$ is an $m$-dimensional Hilbert space with the inner product defined in \eqref{ip}.
\end{theorem}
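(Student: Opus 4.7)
The plan is to verify the three distinct claims bundled in the statement: (a) that $L^2_\omega[\rho(a),\sigma(b)]$ is a linear subspace of $\ell[\rho(a),\sigma(b)]$ of dimension $m$, (b) that the sesquilinear form \eqref{ip} is a genuine inner product on this subspace, and (c) that the resulting normed space is complete. Of these, (a) and (c) are essentially packaged in earlier work: the subspace structure is immediate because both $\mathscr{R}$ (kernel of the boundary-condition map in \eqref{qbc}) and $\mathscr{A}$ (kernel of \eqref{sc38}) are defined by homogeneous linear equations, the dimension equals $m=d(N-2)+r$ by the lemma immediately preceding the theorem, and completeness is automatic once (b) is known since every finite-dimensional inner product space is complete.

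The heart of the argument is therefore (b), and specifically positive definiteness. Conjugate symmetry $\langle x,y\rangle=\overline{\langle y,x\rangle}$ follows from $\omega(t)^*=\omega(t)$, and sesquilinearity is visible in the definition. For positive definiteness, the Hermitian positivity $\omega(t)>0$ together with $\mu_\rho(t)>0$ makes each summand $x^*(t)\omega(t)x(t)\mu_\rho(t)$ nonnegative, so $\langle x,x\rangle=0$ forces $x(t)=0$ for every $t\in[a,b]_\T$. But this only controls the interior values; the form \eqref{ip} is blind to the endpoint data $x^\rho(a)$ and $x^\sigma(b)$, so we still need to argue that these also vanish under the membership constraints $x\in\mathscr{R}\cap\mathscr{A}$.

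The hard part is precisely this recovery of endpoint data from interior data, and this is exactly what the machinery built up in \eqref{sc310}--\eqref{sc314} is designed for. Assuming $x\in L^2_\omega[\rho(a),\sigma(b)]$ with $x(t)=0$ on $[a,b]_\T$, I specialize the characterization in the lemma preceding the theorem: the quantity $f\bigl(x(a),x^\sigma(a),x^\rho(b),x(b)\bigr)$ appearing in \eqref{sc311} is a linear function of its arguments, vanishing when all four vanish, so \eqref{sc314} yields $\gamma=0$. Substituting $\gamma=0$ and the zero interior data back into \eqref{sc310} gives $x^\rho(a)=0$ and $x^\sigma(b)=0$. Hence $x\equiv 0$ on $[\rho(a),\sigma(b)]_\T$, establishing positive definiteness.

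Combining the three ingredients concludes the proof: $L^2_\omega[\rho(a),\sigma(b)]$ is a subspace of dimension $m$, \eqref{ip} defines an inner product on it, and finite dimension forces completeness, so it is indeed an $m$-dimensional Hilbert space.
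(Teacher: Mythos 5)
Your proof is correct and takes essentially the route the paper intends (the paper itself omits the argument, deferring to the discrete-case proof of Shi and Chen): the only nontrivial point is positive definiteness of \eqref{ip} on the admissible space, and you resolve it exactly as the machinery \eqref{sc310}--\eqref{sc314} is designed to, deducing $\gamma=0$ from vanishing data on $[a,b]_\T$ and hence $x^\rho(a)=x^\sigma(b)=0$. The dimension count from the preceding lemma and completeness by finite dimensionality complete the argument just as in the reference.
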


Let
$$ L:=\ell\big|_{L_{\omega}^2[\rho(a),\sigma(b)]}, \quad\text{where $\ell$ is given in \eqref{saeq}.} $$
We will see the self-adjointness of this dynamic operator.

\begin{lemma}
The dynamic operator $L$ maps $L_{\omega}^2[\rho(a),\sigma(b)]$ into itself.
\end{lemma}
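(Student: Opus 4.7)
Given $x\in L^2_\omega[\rho(a),\sigma(b)]$, set $y:=\ell x$, which is initially defined only on $[a,b]_\T$. My plan is to extend $y$ to $[\rho(a),\sigma(b)]_\T$ using the formulas \eqref{sc310} with $\gamma_y$ prescribed by \eqref{sc314}, and then to invoke the preceding lemma, which says that such an extension lies in $L^2_\omega$ precisely when the compatibility condition \eqref{sc313} holds for $y(a)=\ell x(a)$ and $y(b)=\ell x(b)$. Hence the task reduces to verifying
\begin{equation*}
 U_1^*\!\left(\tfrac{1}{\mu_\rho(a)}S_1 P^\rho(a)\,\ell x(a) + R_2\,\ell x(b)\right)=0\qquad\text{for every }x\in L^2_\omega.
\end{equation*}

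To attack this I would first use $x\in\mathscr{A}$, which by \eqref{sc39} places the column $(\ell x(a),-\ell x(b))^T$ in the explicit image described by $V_2$ and the step-size weights, with $\gamma_x$ given by \eqref{sc314}. Substituting this representation into the displayed equation and simplifying with the identities extracted from $U_1^*\Gamma=0$ in \eqref{sc35}---namely $U_1^*S_1 P^\rho(a)=-\mu_\rho(a)\,U_1^*R_1$ together with $U_1^*S_2=0$---the verification collapses to a single matrix identity schematically of the form $U_1^*R\,W\,V_2=0$, where $W$ is a diagonal matrix built from $\mu_\rho(a)$ and $\mu_\sigma(b)$.

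The main obstacle is to prove this matrix identity. My route is to compute $\Gamma R^*-R\Gamma^*$ from the explicit form of $\Gamma$, apply the self-adjointness $RS^*=SR^*$ from the lemma in Section~\ref{secselfadj} (which forces $R_1S_1^*-S_1R_1^*=S_2R_2^*-R_2S_2^*$), and then left-multiply by $U_1^*$, exploiting $U_1^*\Gamma=0$ and $U_1^*S_2=0$ to discard the extraneous terms. Combining the resulting expression with the structural relation $V_2=\Gamma^*U_2M^{*-1}$ from \eqref{sc35} and tracking the diagonal weights in $W$ produces the required cancellation. This algebra is the computational heart of the proof and parallels the uniformly discrete case worked out by Shi and Chen~\cite{sc1}, to whose treatment the paper defers.
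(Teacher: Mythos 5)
Your overall strategy coincides with the paper's: the paper's proof is essentially a citation of \cite[Proposition 3.3]{sc1} together with the extension formula \eqref{sc316}, and you correctly identify that, after extending $\ell x$ to $\rho(a)$ and $\sigma(b)$ via \eqref{sc310} with $\hat\gamma$ from \eqref{sc314}, the characterization lemma reduces everything to verifying \eqref{sc313} for $\ell x$. That reduction is sound. Moreover, your use of $x\in\mathscr{A}$ is the right move: \eqref{sc39} gives $\left(\begin{smallmatrix}\ell x(a)\\-\ell x(b)\end{smallmatrix}\right)=V_2\gamma$ (the weights $\omega(a),\omega(b)$ cancel, so no diagonal matrix $W$ actually survives at this stage), and combining this with $U_1^*\Gamma=0$ and $U_1^*S_2=0$ collapses the verification to the single identity $U_1^*RV_2=0$, equivalently $U_1^*R\Gamma^*U_2=0$ by \eqref{sc35}.

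The gap is in the final cancellation you assert. Carrying out the computation you outline,
\[
R\Gamma^*-\Gamma R^*=\tfrac{1}{\mu_\rho(a)}\bigl(R_1S_1^*-S_1R_1^*\bigr)+\tfrac{1}{\mu_\sigma(b)}\bigl(R_2S_2^*-S_2R_2^*\bigr),
\]
and $RS^*=SR^*$ forces $R_1S_1^*-S_1R_1^*=-(R_2S_2^*-S_2R_2^*)$, so after left-multiplying by $U_1^*$ and discarding the terms killed by $U_1^*\Gamma=0$ and $U_1^*S_2=0$ one is left with
\[
U_1^*R\Gamma^*=\Bigl(\tfrac{1}{\mu_\sigma(b)}-\tfrac{1}{\mu_\rho(a)}\Bigr)U_1^*R_2S_2^*.
\]
In the uniformly discrete case of \cite{sc1} both graininesses equal $1$ and this vanishes identically, which is exactly why the Shi--Chen argument closes; on a genuinely time-varying domain with $\mu_\rho(a)\neq\mu_\sigma(b)$ it need not. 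For instance, with $d=1$, $P^\rho(a)=1$, $\mu_\rho(a)=1$, $\mu_\sigma(b)=2$, $R=\left(\begin{smallmatrix}-1&0\\1&1\end{smallmatrix}\right)$, $S=\left(\begin{smallmatrix}0&1\\-1&0\end{smallmatrix}\right)$, one checks $RS^*=SR^*$, $\rank(R,S)=2$, $r=1$, and yet $U_1^*RV_2\neq0$. So the ``required cancellation'' does not parallel the discrete case: either an extra hypothesis is needed (e.g.\ $\mu_\rho(a)=\mu_\sigma(b)$, or a graininess-weighted reformulation of \eqref{qbc}), or the identity must be obtained some other way. To be fair, this is not a defect you introduced --- the paper's proof-by-citation glosses over exactly the same point --- but as written your argument does not close, and the place where it fails is precisely the place where the time-varying setting differs from \cite{sc1}.
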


\begin{proof}
See \cite[Proposition 3.3]{sc1}, which uses \eqref{sc310} with $x$ replaced by $Lx$ as in

\begin{eqnarray}\label{sc316}
\begin{pmatrix} Lx^\rho(a) \\ Lx^\sigma(b)\end{pmatrix} 
&=& \begin{pmatrix} P^{\rho-1}(a)\left(\left(\mu_\rho(a)\right)^2\tilde{P}(a)Lx(a)-\frac{\mu_\rho(a)}{\mu_\sigma(a)}P(a)Lx^\sigma(a)\right) \\ P^{-1}(b)\left(-\frac{\mu_\sigma(b)}{\mu_\rho(b)}P^\rho(b)Lx^\rho(b)+\mu_\rho(b)\mu_\sigma(b)\tilde{P}(b)Lx(b)\right) \end{pmatrix} \nonumber \\
& & + \begin{pmatrix}  -\left(\mu_\rho(a)\right)^2P^{\rho-1}(a)\omega(a) & 0 \\ 0 & \mu_\rho(b)\mu_\sigma(b)P^{-1}(b)\omega(b) \end{pmatrix}V_2 \hat{\gamma},
\end{eqnarray}
where $\hat{\gamma}$ is determined by \eqref{sc314} with $x(a)$, $x^\sigma(a)$, $x^\rho(b)$, and $x(b)$ replaced by $Lx(a)$, $Lx^\sigma(a)$, $Lx^\rho(b)$, and $Lx(b)$.
\end{proof}

\begin{theorem}\label{scthm32}
Assume the $d\times d$ matrices $P(t)$ for $t\in[\rho(a),b]_\T$, $Q(t)$ and $\omega(t)$ for $t\in[a,b]_\T$, are all Hermitian with $P^\rho(a)$ and $P(b)$ invertible, $\omega(t)>0$ for $t\in[a,b]_\T$, and $R$ and $S$ in \eqref{qbc} are $2d\times 2d$ matrices with $\rank(R,S)=2d$. If the boundary conditions in \eqref{qbc} are self-adjoint, then the dynamic operator $L$ is self-adjoint.
\end{theorem}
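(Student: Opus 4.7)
The plan is to deduce self-adjointness of $L$ directly from the results already established: Theorem~\ref{thm25} gives symmetry of $\ell$ on $\mathscr{R}$, the preceding lemma verifies that $L=\ell|_{L^2_\omega[\rho(a),\sigma(b)]}$ sends the admissible space into itself, and the preceding theorem shows that this space is a genuine (finite-dimensional) Hilbert space with inner product \eqref{ip}. Since on a finite-dimensional Hilbert space a symmetric operator is automatically self-adjoint, these three ingredients combine to give the conclusion with essentially no further computation.

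First I would take arbitrary $x,y\in L^2_\omega[\rho(a),\sigma(b)]$. By the very definition of this space, $x,y\in\mathscr{R}\cap\mathscr{A}$, so in particular $x,y\in\mathscr{R}$. Because the boundary conditions \eqref{qbc} are assumed self-adjoint, Theorem~\ref{thm25} applies and yields
$$ \langle \ell x, y\rangle = \langle x, \ell y\rangle. $$
By the preceding lemma, $\ell x$ and $\ell y$ both remain in $L^2_\omega[\rho(a),\sigma(b)]$, so $L$ is a well-defined linear endomorphism of this space and the displayed identity can be rewritten as
$$ \langle Lx, y\rangle = \langle x, Ly\rangle, \qquad x,y\in L^2_\omega[\rho(a),\sigma(b)]. $$
This is precisely the symmetry of $L$ with respect to the weighted inner product \eqref{ip}.

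Second, because $L^2_\omega[\rho(a),\sigma(b)]$ is a finite-dimensional Hilbert space, the Hilbert-space adjoint $L^*$ is automatically defined on all of $L^2_\omega[\rho(a),\sigma(b)]$, and the symmetry identity above, combined with the nondegeneracy of \eqref{ip}, forces $L^*y=Ly$ for every $y$. Hence $L^*=L$, so $L$ is self-adjoint.

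I do not anticipate any genuine obstacle: the substantive work — isolating $\mathscr{A}$ so that $\ell$ preserves the admissible space via \eqref{sc38}, establishing the Lagrange identity \eqref{eq23}, and unpacking the boundary conditions via Lemma~\ref{lem25} — has already been carried out in the preceding material. The only point to handle with care is that $L^2_\omega[\rho(a),\sigma(b)]$ is a proper subset of $\mathscr{R}$ when $r<2d$, rather than all of $\mathscr{R}$; but the inclusion $L^2_\omega[\rho(a),\sigma(b)]\subset\mathscr{R}$ is exactly what is needed to invoke Theorem~\ref{thm25}, so this causes no difficulty.
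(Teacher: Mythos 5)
Your argument is correct and is exactly the route the paper intends: the paper omits the proof of Theorem~\ref{scthm32}, deferring to Shi and Chen \cite{sc1}, where the analogous result is obtained precisely by combining the symmetry of $\ell$ on $\mathscr{R}$ (Theorem~\ref{thm25}), the invariance of $L^2_\omega[\rho(a),\sigma(b)]$ under $L$, and the fact that a symmetric endomorphism of a finite-dimensional Hilbert space is self-adjoint. You also correctly flag the one point that needs care --- that \eqref{ip} ignores the values at $\rho(a)$ and $\sigma(b)$ but is nondegenerate on the admissible space because those values are determined by \eqref{sc310} and \eqref{sc314} --- so nothing is missing.
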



\section{Dynamic Spectral Theory}

As in the previous section, the machinery for a dynamic spectral theory is in place. All proofs of the following results are modifications of those found in the discrete case in \cite{sc1}, and thus are omitted here. For completeness, however, we list the main ideas and theorems that hold in this setting as well.

\begin{definition}
A complex number $\lambda\in\C$ is an eigenvalue of \eqref{introeq}, \eqref{qbc} if and only if there exists $x\in \ell[\rho(a),\sigma(b)]$ that is nonzero and solves the boundary value problem \eqref{introeq}, \eqref{qbc}. The nonzero solution $x$ is a corresponding eigenfunction for $\lambda$, denoted $x(\cdot,\lambda)$.
\end{definition}

Following the discussion in the previous section, any eigenfunction of \eqref{introeq}, \eqref{qbc} is in the space $L_{\omega}^2[\rho(a),\sigma(b)]$. Therefore, the following fundamental spectral results for \eqref{introeq}, \eqref{qbc} can be directly concluded by Theorem \ref{scthm32} and by
employing the spectral theory of self-adjoint linear operators in Hilbert spaces.
spaces.

\begin{theorem}\label{scthm41}
Assume the $d\times d$ matrices $P(t)$ for $t\in[\rho(a),b]_\T$, $Q(t)$ and $\omega(t)$ for $t\in[a,b]_\T$, are all Hermitian with $P^\rho(a)$ and $P(b)$ invertible, $\omega(t)>0$ for $t\in[a,b]_\T$, $R$ and $S$ in \eqref{qbc} are $2d\times 2d$ matrices with $\rank(R,S)=2d$, and the boundary conditions in \eqref{qbc} are self-adjoint. Let $r=\rank \left(R_1+\frac{1}{\mu_\rho(a)}S_1P^{\rho}(a),\frac{1}{\mu_\sigma(b)}S_2\right)$ and $m=d(N-2)+r$.
\begin{enumerate}
 \item The eigenvalue problem \eqref{introeq}, \eqref{qbc} has $m$ real eigenvalues
       \begin{equation}\label{sc41} 
       \lambda_1, \lambda_2,\cdots, \lambda_m,
       \end{equation}
       including multiplicities, and $m$ linearly independent eigenfunctions
       \begin{equation}\label{sc42} 
       x(\cdot, \lambda_1), x(\cdot, \lambda_2),\cdots, x(\cdot, \lambda_m),
       \end{equation}
       that form an orthonormal set, to wit
       \begin{equation}\label{sc43} 
       \langle x(\cdot, \lambda_j), x(\cdot, \lambda_k)\rangle=\sum_{t\in[a,b]_\T} 
       x^*(t,\lambda_k)\omega(t)x(t,\lambda_j)\mu_\sigma(t)=\delta_{jk}, \quad j,k\in\{1,2,\cdots,m\}.
       \end{equation}
 \item The eigenfunction basis for \eqref{introeq}, \eqref{qbc} consists of $m$ linearly independent eigenfunctions \eqref{sc42} and is complete for 
       admissible function space $L_{\omega}^2[\rho(a),\sigma(b)]$, to wit for each $x\in L_{\omega}^2[\rho(a),\sigma(b)]$, there exists a unique set 
       of scalars $\{c_j\}_{j=1}^m\subset\C$ such that, for $t\in[\rho(a),\sigma(b)]_\T$, we have
       \begin{equation}\label{sc44}
       x(t)=\sum_{j=1}^m c_jx(t,\lambda_j),
       \end{equation}
       where
       \begin{equation}\label{sc45}
       c_j=\langle x, x(\cdot, \lambda_j\rangle = \sum_{t\in[a,b]_\T}x^*(t,\lambda_j)\omega(t)x(t)\mu_\sigma(t), \quad j\in\{1,2,\cdots,m\},
       \end{equation}
       and the following Parseval equality holds, namely
       \begin{equation}\label{sc46}
       \langle x,x\rangle = \sum_{j=1}^m |c_j|^2.
       \end{equation}
 \item The dynamic operator $L$ has the spectral resolution
       \begin{equation}\label{sc47}
       Lx(t)=\sum_{j=1}^m \lambda_j \pi_jx(t) = \int_{-\infty}^{\infty} \lambda dE_{\lambda}x(t), \quad x\in  L_{\omega}^2[\rho(a),\sigma(b)], \quad 
       t\in[\rho(a),\sigma(b)]_\T,
       \end{equation}
       with $Lx^\rho(a)$ and $Lx^\sigma(b)$ given as in \eqref{sc316}, the projective operators are given by
       \begin{equation}\label{sc48}
        \pi_j x(t,\lambda_j)=\delta_{jk}x(t,\lambda_k), \quad j,k\in\{1,2,\cdots,m\}, \quad t\in[\rho(a),\sigma(b)]_\T,
       \end{equation}
       and the projective operator-valued function is given by
       \begin{equation}\label{sc49}
        E_{\lambda}=\begin{cases} \sum_{0<\lambda_j\le \lambda} \pi_j, &: \lambda\ge 0, \\ -\sum_{\lambda<\lambda_j\le 0}\pi_j, &: \lambda<0. \end{cases}
       \end{equation}
\end{enumerate}
\end{theorem}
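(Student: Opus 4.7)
The plan is to reduce Theorem \ref{scthm41} to the classical spectral theorem for a self-adjoint linear operator on a finite-dimensional Hilbert space. All the hard work has already been done: by the preceding lemma $L$ maps $L^2_\omega[\rho(a),\sigma(b)]$ into itself, by the preceding theorem this space is an $m$-dimensional Hilbert space under the inner product \eqref{ip}, and by Theorem \ref{scthm32} the operator $L$ is self-adjoint on it. So the task is really just to translate the abstract spectral conclusions into the concrete formulas \eqref{sc41}--\eqref{sc49}, and to check that any eigenfunction of the boundary value problem genuinely lies in $L^2_\omega[\rho(a),\sigma(b)]$.

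The first step I would take is this verification. If $\lambda\in\C$ is an eigenvalue with eigenfunction $x\in\ell[\rho(a),\sigma(b)]$, then by definition $x$ satisfies \eqref{qbc}, so $x\in\mathscr{R}$. Evaluating the eigenvalue equation \eqref{introeq} at $t=a$ and $t=b$ gives precisely the vector identity displayed just before \eqref{sc38}; multiplying that identity on the left by $V_1^*$ and invoking \eqref{sc37} (which holds because $x\in\mathscr{R}$) shows that the right-hand side of \eqref{sc38} vanishes, so $x\in\mathscr{A}$ as well. Hence $x\in L^2_\omega[\rho(a),\sigma(b)]$, which means the eigenvalue problem for \eqref{introeq}, \eqref{qbc} coincides with the eigenvalue problem for the abstract self-adjoint operator $L$.

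For part (i), the finite-dimensional spectral theorem applied to $L$ immediately yields $m$ real eigenvalues $\lambda_1,\dots,\lambda_m$ counted with multiplicity together with an orthonormal basis of eigenfunctions $x(\cdot,\lambda_1),\dots,x(\cdot,\lambda_m)$, and the orthonormality relation is exactly \eqref{sc43}. For part (ii), given any $x\in L^2_\omega[\rho(a),\sigma(b)]$, the expansion \eqref{sc44}, the Fourier-coefficient formula \eqref{sc45}, and the Parseval identity \eqref{sc46} are the standard consequences of orthonormal basis expansion: set $c_j:=\langle x,x(\cdot,\lambda_j)\rangle$, observe that $x-\sum_{j=1}^m c_j x(\cdot,\lambda_j)$ is orthogonal to every $x(\cdot,\lambda_k)$ and hence to a spanning set, so it vanishes, and then $\langle x,x\rangle=\sum_{j=1}^m|c_j|^2$ follows by bilinearity.

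For part (iii), I would apply $L$ term-by-term to \eqref{sc44}; using $Lx(\cdot,\lambda_j)=\lambda_j x(\cdot,\lambda_j)$ and the projection identity \eqref{sc48} (which is itself a direct consequence of orthonormality) gives $Lx=\sum_{j=1}^m\lambda_j\pi_j x$ on $[a,b]_\T$, and the extension to $t=\rho(a)$ and $t=\sigma(b)$ is exactly the content of the formula \eqref{sc316} supplied by the preceding lemma. The Stieltjes-integral form $\int_{-\infty}^{\infty}\lambda\,dE_\lambda x$ then follows from the definition \eqref{sc49}, which is tailored so that $E_\lambda$ jumps by $\pi_j$ at $\lambda=\lambda_j$. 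The main potential obstacle is purely bookkeeping at the endpoints $\rho(a)$ and $\sigma(b)$, since those components of $x$ are not weighted by $\omega$ in the inner product; but this is precisely the issue that was resolved by building the admissible space to include the conditions \eqref{sc310} and \eqref{sc313}, so the extension is forced and the proof reduces to citing \cite[\S3--4]{sc1} almost verbatim.
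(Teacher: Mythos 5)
Your proposal is correct and follows essentially the same route as the paper, which omits the detailed argument and simply notes that every eigenfunction of \eqref{introeq}, \eqref{qbc} lies in $L^2_\omega[\rho(a),\sigma(b)]$, so that the conclusions follow from Theorem \ref{scthm32} together with the finite-dimensional spectral theorem for self-adjoint operators, with details as in \cite{sc1}. In fact you supply slightly more than the paper does, by explicitly checking membership in $\mathscr{A}$ via \eqref{sc37} and \eqref{sc38} and by handling the endpoint values through \eqref{sc316}, steps the paper only asserts.
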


\begin{theorem}[Dual Orthogonality]
Assume the $d\times d$ matrices $P(t)$ for $t\in[\rho(a),b]_\T$, $Q(t)$ and $\omega(t)$ for $t\in[a,b]_\T$, are all Hermitian with $P^\rho(a)$ and $P(b)$ invertible, $\omega(t)>0$ for $t\in[a,b]_\T$, $R$ and $S$ in \eqref{qbc} are $2d\times 2d$ matrices with $\rank(R,S)=2d$, and the boundary conditions in \eqref{qbc} are self-adjoint. Let $r=\rank \left(R_1+\frac{1}{\mu_\rho(a)}S_1P^{\rho}(a),\frac{1}{\mu_\sigma(b)}S_2\right)$ and $m=d(N-2)+r$. If the boundary conditions \eqref{qbc} are proper, that is $r=2d$, then $m=dN$ and the $dN$ eigenfunctions \eqref{sc42} in Theorem $\ref{scthm41}$ satisfy the dual orthogonality condition
\begin{equation}\label{sc410}
 \sum_{j=1}^{dN} x(t,\lambda_j)x^*(\tau,\lambda_j)=\delta(t,\tau)\omega^{-1}(\tau), \quad t,\tau\in[a,b]_\T, \quad \delta(t,\tau)=\begin{cases} 0 &:t\neq \tau, \\ 1 &: t=\tau. \end{cases}
\end{equation}
\end{theorem}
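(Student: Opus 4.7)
The plan is to extract the dual orthogonality identity \eqref{sc410} from the completeness relations \eqref{sc44}--\eqref{sc45} by testing them against a family of ``point-mass'' admissible functions. First I would verify that in the proper case $r=2d$, the coefficient matrix $\Gamma$ defined after \eqref{sc32} is invertible, so that $V_1$ in \eqref{sc33} has $2d-r=0$ columns and the auxiliary constraint \eqref{sc38} defining $\mathscr{A}$ is vacuous. Consequently $L^2_\omega[\rho(a),\sigma(b)]=L[\rho(a),\sigma(b)]$ has dimension $m=dN$, the interior values on $[a,b]_\T$ may be chosen freely in $\C^d$ at each node, and the boundary values $x^\rho(a),\,x^\sigma(b)$ are then uniquely recovered by solving \eqref{sc32}.

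Next, for each $\tau\in[a,b]_\T$ and each $v\in\C^d$, I would construct the admissible function $\phi_{\tau,v}\in L^2_\omega[\rho(a),\sigma(b)]$ characterised on $[a,b]_\T$ by $\phi_{\tau,v}(s)=\delta(s,\tau)\,v$, with $\phi_{\tau,v}^\rho(a)$ and $\phi_{\tau,v}^\sigma(b)$ determined uniquely from properness of the boundary conditions. Substituting $\phi_{\tau,v}$ into the Fourier expansion \eqref{sc44} with coefficients given by \eqref{sc45} and interchanging the two finite sums yields, for every $t\in[\rho(a),\sigma(b)]_\T$,
\[
\phi_{\tau,v}(t)=\Bigl(\sum_{j=1}^{dN}x(t,\lambda_j)x^*(\tau,\lambda_j)\Bigr)\omega(\tau)\,v,
\]
after absorbing the graininess factor in \eqref{sc45} against the definition of $\phi_{\tau,v}$.

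Finally I would evaluate this identity at $t\in[a,b]_\T$. For $t\neq\tau$ the left side vanishes, and since $v\in\C^d$ is arbitrary and $\omega(\tau)$ is positive definite, the off-diagonal part of \eqref{sc410} follows. For $t=\tau$ the left side equals $v$, so the matrix $\bigl(\sum_{j}x(\tau,\lambda_j)x^*(\tau,\lambda_j)\bigr)\omega(\tau)$ must equal the $d\times d$ identity, giving the diagonal part of \eqref{sc410}. Combining both cases produces \eqref{sc410}.

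The main obstacle is confirming that $\phi_{\tau,v}$ genuinely belongs to $L^2_\omega[\rho(a),\sigma(b)]$. This is precisely where properness enters: without $r=2d$ the constraint \eqref{sc38} imposes nontrivial linear relations among $x(a),x^\sigma(a),x^\rho(b),x(b)$ which a generic point-mass profile on $[a,b]_\T$ will violate, and dually the dimension $m=d(N-2)+r<dN$ is too small for a reproducing-kernel identity involving $dN$ outer products as in \eqref{sc410}. A secondary subtlety is bookkeeping the graininess weights $\mu_\rho$ and $\mu_\sigma$ appearing in \eqref{ip} and \eqref{sc45} so that no spurious multiplicative factor survives on the right-hand side of \eqref{sc410}; this is the only place where the time-varying setting meaningfully departs from the uniform case in \cite{sc1}.
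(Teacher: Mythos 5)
Your overall strategy is the right one and is essentially the argument the paper defers to in the discrete case \cite{sc1}: when $r=2d$ the constraint \eqref{sc38} is vacuous, $L^2_\omega[\rho(a),\sigma(b)]$ coincides with the $dN$-dimensional space $L[\rho(a),\sigma(b)]$, the interior values on $[a,b]_\T$ may be prescribed freely with $x^\rho(a),x^\sigma(b)$ recovered from the invertibility of $\Gamma$ in \eqref{sc32}, and dual orthogonality is just ``row orthonormality'' of the square matrix built from the orthonormal eigenfunctions; testing the expansion \eqref{sc44}--\eqref{sc45} against point masses is an equivalent implementation of that idea, and your admissibility check for $\phi_{\tau,v}$ and the dimension count $m=dN$ are fine.

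However, the step you describe as ``absorbing the graininess factor'' is a genuine gap, not harmless bookkeeping. With $\phi_{\tau,v}(s)=\delta(s,\tau)\,v$ on $[a,b]_\T$, formula \eqref{sc45} gives $c_j=x^*(\tau,\lambda_j)\,\omega(\tau)\,v\,\mu_\sigma(\tau)$ (or with $\mu_\rho(\tau)$ if one uses \eqref{ip}; note the paper is itself inconsistent between \eqref{ip} and \eqref{sc43}--\eqref{sc45}), so the expansion \eqref{sc44} yields
\[
\delta(t,\tau)\,v=\Bigl(\sum_{j=1}^{dN}x(t,\lambda_j)x^*(\tau,\lambda_j)\Bigr)\omega(\tau)\,\mu_\sigma(\tau)\,v,
\qquad\text{hence}\qquad
\sum_{j=1}^{dN}x(t,\lambda_j)x^*(\tau,\lambda_j)=\delta(t,\tau)\,\frac{\omega^{-1}(\tau)}{\mu_\sigma(\tau)}.
\]
The factor $1/\mu_\sigma(\tau)$ cannot be made to vanish: it is forced by unitarity, since the column orthonormality \eqref{sc43} weights each node by $\mu_\sigma(t)$, so the dual (row) relation must carry the reciprocal weight; rescaling $\phi_{\tau,v}$ merely moves the factor to the other side of the identity. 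Thus what your argument actually establishes is the identity with the extra graininess factor, which reduces to \eqref{sc410} only in the uniform case $\mu\equiv 1$ of \cite{sc1}; as written, it does not prove \eqref{sc410} (equivalently, \eqref{sc410} as printed needs $1/\mu_\sigma(\tau)$, resp.\ $1/\mu_\rho(\tau)$, inserted on the right-hand side). The off-diagonal case $t\neq\tau$ is unaffected by this issue, but the diagonal normalization is exactly where the time-varying setting departs from the discrete model, and your proposal flags the subtlety without resolving it.
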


\begin{remark}
There are additional variational properties of eigenvalues and comparison results, including Rayleigh's principle and a minimax theorem, that carry over verbatim from the discrete case. We refer the interested reader to \cite[Section 5]{sc1}. One could also use the recent results by Shi and Wu \cite{sw} in the discrete case to improve our results above.
\end{remark}


\section{Further Extensions to Sturmian Theory}

For the discussion that follows, assume the time scale $\T$ is general, that is to say not necessarily isolated. In the classic continuous case $(\T=\R)$, scalar, vector, and matrix equations of the form
$$ -(px')'(t)+q(t)x(t)=0 $$
are embedded in a robust theory that extends to even-order Sturm-Liouville equations, linear Hamiltonian and symplectic systems; see, for example, Reid \cite{Reid3} and Kratz \cite{kratz}. It is common on general time scales to consider these same connections using the basic second-order scalar equation
\begin{equation}\label{ddeq}
 -(px^\Delta)^\Delta(t)+q(t)x^\sigma(t)=0. 
\end{equation}
Erbe and Hilger \cite{eh} began a Sturmian study of this equation, followed by Agarwal, Bohner, and Wong \cite{abw}. Recently Kong \cite{kong} extended some of these results. To the best of our knowledge, however, it is an open problem on how to extend \eqref{ddeq} to even-order self-adjoint Sturm-Liouville problems using only delta derivatives. 

As seen above there is a natural alternative theory that builds on the scalar equation
$$ -(px^\Delta)^\nabla(t)+q(t)x(t)=0. $$
After Atici and Guseinov introduced the nabla derivative \cite{ag} as a left-hand counterpart on time scales to the delta derivative, there has been an attempt to formulate a Sturmian theory using both delta and nabla derivatives. Messer \cite{messer} continued the work of \cite{ag} by studying second-order scalar self-adjoint equations; this work was extended to higher order equations, first by Guseinov \cite{gus}, and then by Anderson, Guseinov, and Hoffacker \cite{agh}; see also Davidson and Rynne \cite{dr}. A Reid roundabout theorem and an analysis of dominant and recessive solutions is given in \cite{ab,and} for the related matrix equation \eqref{saeq2}, given below.

A next step might be to extend the results of this paper to even-order Sturm-Liouville equations and linear Hamiltonian systems on arbitrary time scales, or more generally to symplectic systems on time scales. We illustrate this in one possible treatment not found in the literature. For example, we reconsider \eqref{saeq} rewritten in the related homogeneous second-order self-adjoint matrix dynamic equation in the form
\begin{equation}\label{saeq2}
 -\left(PX^\Delta\right)^\nabla(t)+Q(t)X(t)=0, \quad t\in[a,b]_\T.
\end{equation}
Let $P$ be invertible Hermitian, $Q$ ld-continuous Hermitian, and let $\D$ denote the set of all $n\times n$ matrix-valued functions $X$ defined on $\T$ such that $X^\Delta$ is continuous on $[\rho(a),b]_\T$ and $(PX^\Delta)^\nabla$ is left-dense continuous on $[a,b]_\T$. Then $X$ is a solution of \eqref{saeq2} on $[\rho(a),\sigma(b)]_\T$ provided $X\in\D$ and $X(t)$ satisfies the equation for all $t\in[a,b]_\T$.

\begin{lemma}
Let $X\in\D$. If we set
\begin{equation}\label{zs}
  Z:=\left(\begin{smallmatrix} X \\ PX^\Delta \end{smallmatrix}\right)  \quad\text{and}\quad
	S:=\left(\begin{smallmatrix} -\mu_\rho P^{\rho-1}Q & P^{\rho-1} \\ Q & 0 \end{smallmatrix}\right), 
\end{equation}
then $X$ solves \eqref{saeq2} on $[a,b]_\T$ if and only if $Z$ solves
\begin{equation}\label{me2}
	Z^\nabla = S(t)Z, \quad t\in[a,\sigma(b)]_\T.
\end{equation}
\end{lemma}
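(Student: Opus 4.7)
The approach is a direct componentwise verification. Writing out $Z^\nabla$ and $S(t)Z(t)$ block-by-block, one sees immediately that the lower block of the first-order system $Z^\nabla = S(t)Z$ reads $(PX^\Delta)^\nabla(t) = Q(t)X(t)$, which is literally the matrix equation \eqref{saeq2}. So the bulk of the work is to confirm that, for any $X\in\D$, the upper block of $Z^\nabla = S(t)Z$ is compatible with (and in fact equivalent to) \eqref{saeq2}.

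The key ingredient I would use is the defining nabla identity $f(t) = f^\rho(t) + \mu_\rho(t) f^\nabla(t)$, valid at every left-scattered point (and trivially at left-dense points). Applied to $f = PX^\Delta$, together with the relation $(X^\Delta)^\rho = X^\nabla$ built into $\D$, this produces an \emph{unconditional} identity of the form
\[
P(t)X^\Delta(t) = P^\rho(t)X^\nabla(t) + \mu_\rho(t)(PX^\Delta)^\nabla(t),
\]
requiring no assumption on $X$ beyond membership in $\D$. Multiplying on the left by $P^{\rho-1}(t)$ (which exists by hypothesis) and solving for $X^\nabla(t)$ gives a formula that matches the upper block of $S(t)Z(t)$ precisely when $(PX^\Delta)^\nabla = QX$.

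Both implications then follow by inspection. For the forward direction, I would substitute $(PX^\Delta)^\nabla = QX$ into the unconditional identity above to recover the upper block of $S(t)Z(t)$, while the lower block of $S(t)Z(t)$ is $QX$ by definition of $S$. For the reverse direction, I would simply read off \eqref{saeq2} from the lower block of $Z^\nabla = S(t)Z$; the upper block is then automatically consistent by the unconditional identity. I do not expect any real obstacle: this is the standard reduction of a linear second-order equation to a first-order Hamiltonian-type system, and the only care required is bookkeeping of the shifts $\rho$ and $\sigma$ and confirming the computation is valid on the full stated domain $[a,\sigma(b)]_\T$ under the standing hypotheses on $P$ and $Q$. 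The lemma then sets up the Hamiltonian/symplectic reformulation that motivates the surrounding discussion.
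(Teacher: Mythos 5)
Your proposal is correct and follows essentially the same route as the paper: the paper also verifies $Z^\nabla=S(t)Z$ blockwise, using precisely the identity $\mu_\rho (PX^\Delta)^\nabla = PX^\Delta-(PX^\Delta)^\rho$ together with $(PX^\Delta)^\rho=P^\rho X^\nabla$, which holds because $X\in\D$ forces $X^\Delta$ to be continuous. Your explicit isolation of the unconditional identity makes the equivalence (both directions) a bit more transparent than the paper's one-directional sketch, but the underlying computation is the same.
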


\begin{proof}
We give an idea of the proof. Let $X\in\D$ and $Z,S$ be given as in \eqref{zs}; multiplying out, we get 
\begin{eqnarray*}
 SZ &=& \left(\begin{smallmatrix} -\mu_\rho P^{\rho-1}Q & P^{\rho-1} \\ Q & 0 \end{smallmatrix}\right)\left(\begin{smallmatrix} X \\ PX^\Delta \end{smallmatrix}\right) 
 = \left(\begin{smallmatrix} -\mu_\rho P^{\rho-1}QX+P^{\rho-1}PX^\Delta \\ QX \end{smallmatrix}\right) \\
 &=& \left(\begin{smallmatrix} -\mu_\rho P^{\rho-1}(PX^\Delta)^\nabla+P^{\rho-1}PX^\Delta \\ QX \end{smallmatrix}\right) \\
 &=& \left(\begin{smallmatrix} -P^{\rho-1}((PX^\Delta)-(PX^\Delta)^\rho)+P^{\rho-1}PX^\Delta \\ QX \end{smallmatrix}\right) \\
 &=& \left(\begin{smallmatrix} P^{\rho-1}(PX^\Delta)^\rho \\ QX \end{smallmatrix}\right) = \left(\begin{smallmatrix} X^\nabla \\ QX \end{smallmatrix}\right) \\
 &=& \left(\begin{smallmatrix} X^\nabla \\ (PX^\Delta)^\nabla \end{smallmatrix}\right) = Z^\nabla,
\end{eqnarray*} 
where $(PX^\Delta)^\rho=P^\rho X^\nabla$ since $X\in\D$ implies $X^\Delta$ is continuous.
\end{proof}

Next we introduce a new type of Hamiltonian for the nabla differential operator, not previously discussed in the literature, although clearly in parallel with that available for the delta differential operator. In the discrete case see Ahlbrandt and Peterson \cite{ap}, and Shi \cite{shi1,shi2}, and see Bohner and Peterson \cite[Chapter 7]{bp1} for the delta operator on time scales.

\begin{definition}\label{d111}
A $2n\times 2n$-matrix-valued function $\mathcal{H}$ is Hamiltonian with respect to $\T$ provided
\begin{equation}\label{14}
  \mathcal{H}(t)\mbox{ is Hamiltonian and }
  I+\mu_\rho(t)\mathcal{H}(t)\mathcal{M}^*\mathcal{M}\mbox{ is invertible for all }t\in\T,
\end{equation}
where we have used the $2n\times 2n$-matrix $\mathcal{M}=\left(\begin{smallmatrix} 0 & I \\ 0 & 0 \end{smallmatrix}\right)$. For $\mathcal{H}$ satisfying \eqref{14}, the system
\begin{equation}\label{15}
  z^\nabla=\mathcal{H}(t)[\mathcal{M}^*\mathcal{M} z^\rho + \mathcal{M}\mathcal{M}^*z]
\end{equation}
is then called a {\em linear Hamiltonian dynamic nabla system}.
\end{definition}

\begin{remark}\label{r112}
Take $\mathcal{J}$ to be the $2n\times 2n$-matrix $\mathcal{J}=\left(\begin{smallmatrix} 0 & I \\ -I & 0 \end{smallmatrix}\right)$ and
$\mathcal{H}=\left(\begin{smallmatrix} \mathcal{A} & \mathcal{B} \\ \mathcal{C} & -\mathcal{A}^* \end{smallmatrix}\right)$ for some $n\times n$-matrix-valued functions $\mathcal{A}$, $\mathcal{B}$, $\mathcal{C}$ such that $\mathcal{B}$ and $\mathcal{C}$ are Hermitian. Then $\mathcal{H}^*\mathcal{J}+\mathcal{J}\mathcal{H}=0$, so that $\mathcal{H}$ is Hamiltonian, and 
$$ I+\mu_\rho\mathcal{H}\mathcal{M}^*\mathcal{M}= \left(\begin{smallmatrix} I & \mu_\rho \mathcal{B} \\ 0 & I-\mu_\rho \mathcal{A}^*\end{smallmatrix}\right). $$
Therefore, condition \eqref{14} may be rewritten as $\mathcal{H}=\left(\begin{smallmatrix} \mathcal{A} & \mathcal{B} \\ \mathcal{C} & -\mathcal{A}^*\end{smallmatrix}\right)$, where $I-\mu_\rho\mathcal{A}^*$ is invertible and $\mathcal{B}, \mathcal{C}$ are Hermitian. If we let $z=\left(\begin{smallmatrix} x \\ u \end{smallmatrix}\right)$, we can rewrite the system \eqref{15} as
\begin{equation}\label{nabham}
 x^\nabla = \mathcal{A}(t)x + \mathcal{B}(t)u^\rho,\quad
 u^\nabla = \mathcal{C}(t)x - \mathcal{A}^*(t)u^\rho. 
\end{equation}
Also note that equation \eqref{saeq2} is equivalent to a Hamiltonian system by taking $\mathcal{A}=0$, $\mathcal{B}=P^{\rho-1}$, $\mathcal{C}=Q$, $x=X$, and $u=PX^\Delta$. Finally, note that \eqref{nabham} is the form of the discrete Hamiltonian introduced by Ahlbrandt \cite{ahl}.
\end{remark}

\begin{theorem}\label{thm113}
If we set
\begin{equation}\label{17}
 \mathcal{S}=(I+\mu_\rho\mathcal{H}\mathcal{M}^*\mathcal{M})^{-1}\mathcal{H},
\end{equation}
then every Hamiltonian system \eqref{15} is also a symplectic nabla system $z^\nabla=\mathcal{S}(t)z$.
\end{theorem}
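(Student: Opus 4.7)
The plan is to break the proof into two parts: first, derive the nabla-form equation $z^\nabla=\mathcal{S}(t)z$ from \eqref{15} by routine algebra, and second, verify that the resulting $\mathcal{S}$ enjoys the symplectic-preserving identity.

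For the first part, I would start with \eqref{15}, namely $z^\nabla=\mathcal{H}[\mathcal{M}^*\mathcal{M} z^\rho+\mathcal{M}\mathcal{M}^*z]$, and substitute the elementary identity $z^\rho=z-\mu_\rho z^\nabla$, which is simply the definition of the nabla derivative rearranged. A direct block computation shows $\mathcal{M}^*\mathcal{M}+\mathcal{M}\mathcal{M}^*=I_{2n}$, so collecting the $z^\nabla$-terms yields $(I+\mu_\rho\mathcal{H}\mathcal{M}^*\mathcal{M})z^\nabla=\mathcal{H}z$. The invertibility hypothesis in \eqref{14} then allows us to solve for $z^\nabla$ and read off $\mathcal{S}=(I+\mu_\rho\mathcal{H}\mathcal{M}^*\mathcal{M})^{-1}\mathcal{H}$.

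For the second part, by analogy with the discrete-delta case (where the symplectic condition on $z^\Delta=\mathcal{S}z$ is $(I+\mu_\sigma\mathcal{S})^*\mathcal{J}(I+\mu_\sigma\mathcal{S})=\mathcal{J}$), I would adopt the corresponding nabla condition $(I-\mu_\rho\mathcal{S})^*\mathcal{J}(I-\mu_\rho\mathcal{S})=\mathcal{J}$ as the working notion of a symplectic nabla system. Setting $A=I+\mu_\rho\mathcal{H}\mathcal{M}^*\mathcal{M}$ and $B=I-\mu_\rho\mathcal{H}\mathcal{M}\mathcal{M}^*$, the identity $\mathcal{M}^*\mathcal{M}+\mathcal{M}\mathcal{M}^*=I$ gives the pleasant factorization $I-\mu_\rho\mathcal{S}=A^{-1}B$. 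The target symplectic identity is then equivalent to $A\mathcal{J}A^*=B\mathcal{J}B^*$, which I would expand out in powers of $\mu_\rho$.

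The main obstacle is controlling the two nontrivial groups of terms that arise. The first-order terms reduce to $\mu_\rho[\mathcal{J}\mathcal{H}^*+\mathcal{H}\mathcal{J}]$, and this vanishes because $\mathcal{H}$ being Hamiltonian means $\mathcal{H}^*\mathcal{J}+\mathcal{J}\mathcal{H}=0$, from which conjugating by $\mathcal{J}$ (using $\mathcal{J}^2=-I$) gives $\mathcal{J}\mathcal{H}^*=-\mathcal{H}\mathcal{J}$. The second-order terms reduce to $\mu_\rho^2\mathcal{H}\bigl[\mathcal{M}^*\mathcal{M}\mathcal{J}\mathcal{M}^*\mathcal{M}-\mathcal{M}\mathcal{M}^*\mathcal{J}\mathcal{M}\mathcal{M}^*\bigr]\mathcal{H}^*$, and both of the bracketed matrix sandwiches vanish because $\mathcal{J}$ is purely off-diagonal in block form while $\mathcal{M}^*\mathcal{M}$ and $\mathcal{M}\mathcal{M}^*$ are the two complementary diagonal block projections, so the product kills itself on either side. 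Together these two cancellations establish $A\mathcal{J}A^*=B\mathcal{J}B^*$ and hence the symplectic property; the overall proof is thus a short calculation, with the algebraic coincidence between the Hamiltonian property and the block structure of $\mathcal{M}$ doing the real work.
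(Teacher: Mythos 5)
Your proposal is correct. The first half (substituting $z^\rho=z-\mu_\rho z^\nabla$, using $\mathcal{M}^*\mathcal{M}+\mathcal{M}\mathcal{M}^*=I$, and inverting $I+\mu_\rho\mathcal{H}\mathcal{M}^*\mathcal{M}$ via \eqref{14}) is exactly the paper's derivation of \eqref{17}. The second half, however, takes a genuinely different route to the symplectic verification. The paper substitutes the block decomposition $\mathcal{H}=\left(\begin{smallmatrix}\mathcal{A}&\mathcal{B}\\\mathcal{C}&-\mathcal{A}^*\end{smallmatrix}\right)$, writes out $\mathcal{S}$ explicitly in terms of $\mathcal{A},\mathcal{B},\mathcal{C}$ and $(I-\mu_\rho\mathcal{A}^*)^{-1}$, and then asserts that $\mathcal{S}^*\mathcal{J}+\mathcal{J}\mathcal{S}=\mu_\rho\mathcal{S}^*\mathcal{J}\mathcal{S}$ is ``straightforward to check.'' You instead stay coordinate-free: the factorization $I-\mu_\rho\mathcal{S}=A^{-1}B$ with $A=I+\mu_\rho\mathcal{H}\mathcal{M}^*\mathcal{M}$, $B=I-\mu_\rho\mathcal{H}\mathcal{M}\mathcal{M}^*$ reduces everything to $A\mathcal{J}A^*=B\mathcal{J}B^*$, and the two cancellations you identify (the Hamiltonian identity $\mathcal{J}\mathcal{H}^*+\mathcal{H}\mathcal{J}=0$ at first order; $\mathcal{M}^*\mathcal{M}\mathcal{J}\mathcal{M}^*\mathcal{M}=\mathcal{M}\mathcal{M}^*\mathcal{J}\mathcal{M}\mathcal{M}^*=0$ at second order) are both correct. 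Your argument is arguably the more transparent one, since it exposes exactly which structural facts do the work, whereas the paper's explicit $2\times 2$-block computation is left to the reader. Two small points to tidy up: (a) your working definition $(I-\mu_\rho\mathcal{S})^*\mathcal{J}(I-\mu_\rho\mathcal{S})=\mathcal{J}$ agrees with the paper's additive condition $\mathcal{S}^*\mathcal{J}+\mathcal{J}\mathcal{S}=\mu_\rho\mathcal{S}^*\mathcal{J}\mathcal{S}$ only after dividing by $\mu_\rho$, so at a left-dense point ($\mu_\rho=0$) of a general time scale you should note separately that $\mathcal{S}=\mathcal{H}$ and the Hamiltonian identity gives the paper's condition directly; (b) passing from $(A^{-1}B)^*\mathcal{J}(A^{-1}B)=\mathcal{J}$ to $A\mathcal{J}A^*=B\mathcal{J}B^*$ uses the invertibility of $B=\left(\begin{smallmatrix}I-\mu_\rho\mathcal{A}&0\\-\mu_\rho\mathcal{C}&I\end{smallmatrix}\right)$, which follows from \eqref{14} since $I-\mu_\rho\mathcal{A}$ is invertible iff its adjoint $I-\mu_\rho\mathcal{A}^*$ is. Neither point is a real gap.
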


\begin{proof}
We assume first that $z$ solves \eqref{15}. Using the simple formula $z^\rho=z-\mu_\rho z^\nabla$ and observing that $\mathcal{M}^*\mathcal{M}+\mathcal{M}\mathcal{M}^*=I$, we obtain $(I+\mu_\rho\mathcal{H}\mathcal{M}^*\mathcal{M})z^\nabla=\mathcal{H} z$, 
so that \eqref{17} follows from \eqref{14}. To verify that $\mathcal{S}$ defined by \eqref{17} is symplectic with respect
to $\T$, videlicet satisfies condition $\mathcal{S}^*(t)\mathcal{J}+\mathcal{J}\mathcal{S}(t)=\mu_\rho(t)\mathcal{S}^*(t)\mathcal{J}\mathcal{S}(t)$ for all $t\in\T$,  note that
$$  \mathcal{S}=\left(\begin{smallmatrix} \mathcal{A}-\mu_\rho\mathcal{B}(I-\mu_\rho\mathcal{A}^*)^{-1}\mathcal{C} & \mathcal{B}(I-\mu_\rho\mathcal{A}^*)^{-1} \\ (I-\mu_\rho\mathcal{A}^*)^{-1}\mathcal{C} & -(I-\mu_\rho\mathcal{A}^*)^{-1}\mathcal{A}^*\end{smallmatrix}\right). $$
It is straightforward to check that $\mathcal{S}^*\mathcal{J}+\mathcal{J}\mathcal{S} =\mu_\rho\mathcal{S}^*\mathcal{J}\mathcal{S}$ holds on $\T$.
\end{proof}

As a final important illustration, consider even-order Sturm-Liouville dynamic equations; see a related discussion by Shi and Chen \cite{sc2} on higher-order discrete Sturm-Liouville problems. This is a new formulation not previously presented, but clearly in the spirit of the classical Sturm-Liouville analysis in the continuous case; see Kratz \cite[(6.2.1)]{kratz} and Shi and Chen \cite[p. 8]{sc2}.

\begin{example}
Consider the even-order Sturm-Liouville dynamic equation
\begin{eqnarray} \label{My}
My(t) &=& \sum\limits_{k=0}^n (-1)^{n-k}\left( p_{n-k}y^{\nabla^{n-k-1}\Delta} \right)^{\Delta^{n-k-1}\nabla}(t) \\ \nonumber
  &=& (-1)^n\left(p_ny^{\nabla^{n-1}\Delta}\right)^{\Delta^{n-1}\nabla}(t) + 
  \dots  - \left(p_{3}y^{\Delta^2\nabla}\right)^{\nabla^2\Delta}(t) \\ 
  & & + \left(p_{2}y^{\nabla\Delta}\right)^{\Delta\nabla}(t) - \left(p_{1}y^\Delta\right)^\nabla(t) 
      + p_0(t)y(t), \nonumber
\end{eqnarray}
which is formally self-adjoint \cite{agh}, $p_n\neq 0$. We will show \eqref{My} can be written in the form of \eqref{nabham}, where
\begin{gather*}
  \mathcal{A}=(a_{ij})_{1\leq i,j\leq n} \quad\text{with}\quad
   a_{ij}=\begin{cases}1: & \text{if } j=i+1,\; 1\leq i\leq n-1, \\
                       0: & \text{otherwise,} \end{cases}\\
  \mathcal{B}=\diag\left\{0,\dots,0,\frac{1}{p_n^\rho}\right\},\quad
  \mathcal{C}=\diag\left\{p_0,p_1^\rho,p_2^\rho,\ldots,p_{n-1}^\rho\right\}.
\end{gather*}
\end{example}
\noindent To do this, we introduce the pseudo-derivatives of the function $y$ given by
\begin{eqnarray}
  y^{[k]} &=& y^{\nabla^k}, \quad 0\le k\le n-1, \quad y^{[0]} = y^{\nabla^0} = y,  \label{yk} \\
  y^{[n]} &=& p_ny^{\nabla^{n-1}\Delta}, \nonumber \\
  y^{[n+k]} &=& p_{n-k}y^{\nabla^{n-k-1}\Delta} - \left(y^{[n+k-1]}\right)^\Delta \nonumber \\ 
   &=&\sum_{i=0}^k(-1)^{k-i}\left(p_{k-i}y^{\nabla^{n-i-1}\Delta}\right)^{\Delta^{k-i}}, \quad 1 \le k \le n-1, \nonumber \\
  y^{[2n]} &=& p_0y - \left(y^{[2n-1]}\right)^\nabla=My. \label{y2n}
\end{eqnarray}
From \eqref{yk} and \eqref{y2n} we have 
\begin{eqnarray*}
  && \left(y^{[k]}\right)^\nabla = y^{[k+1]}, \quad 0 \le k \le n-2,  \\
  && \left(y^{[n-1]}\right)^\nabla = y^{\nabla^n}=\frac{1}{p_n^\rho}\left(y^{[n]}\right)^\rho, \\
  && \left(y^{[n+k-1]}\right)^\nabla = p_{n-k}^\rho y^{[n-k]} - \left(y^{[n+k]}\right)^\rho, \quad 1 \le k\le n-1, \\
  && \left(y^{[2n-1]}\right)^\nabla = p_0 y - My. 
\end{eqnarray*}
For more details on the establishment of related (but different) types of formulas, please see \cite[Section 3]{agh}. Then using the substitution
$$ x = \left(\begin{smallmatrix} y^{[0]} \\ y^{[1]} \\ \vdots \\ y^{[n-1]} \end{smallmatrix}\right), \quad
   u = \left(\begin{smallmatrix} y^{[2n-1]} \\ y^{[2n-2]} \\ \vdots \\ y^{[n]} \end{smallmatrix}\right), $$
and the matrices $\mathcal{A}$, $\mathcal{B}$, and $\mathcal{C}$ above, we have that 
$$ x^\nabla = \mathcal{A}(t)x + \mathcal{B}(t)u^\rho, \quad u^\nabla = \mathcal{C}(t)x - \mathcal{A}^*(t)u^\rho, $$
and the example is complete.

Consider extending the discrete linear Hamiltonian analysis given by Shi \cite{shi1} to an eigenvalue problem for the Hamiltonian nabla system \eqref{nabham} on general time scales (including non-isolated time scales), namely
$$ x^\nabla = \mathcal{A}(t)x + \mathcal{B}(t)u^\rho, \quad u^\nabla = \left[\mathcal{C}(t)-\lambda\omega(t)\right]x - \mathcal{A}^*(t)u^\rho, \quad t\in[a,b]_\T $$
with boundary conditions
\begin{equation}\label{hambc}
 R\left(\begin{smallmatrix} -x^\rho(a) \\ x(b) \end{smallmatrix}\right)+S\left(\begin{smallmatrix} u^\rho(a) \\ u(b) \end{smallmatrix}\right)=0.
\end{equation}
Here $\mathcal{A}$, $\mathcal{B}$, $\mathcal{C}$ and $\omega$ are $d\times d$ matrices, $\mathcal{B}$ and $\mathcal{C}$ are Hermitian, $\omega>0$ is positive definite, $A^*$ denotes the complex conjugate transpose of $A$, and $R$ and $S$ are as in the first part of the paper, to wit $2d\times 2d$ matrices with $\rank(R,S)=2d$. Write 
$$ H(t) = \left(\begin{smallmatrix} -\mathcal{C}(t) & \mathcal{A}^*(t) \\ \mathcal{A}(t) & \mathcal{B}(t) \end{smallmatrix}\right), \quad 
   W(t) = \left(\begin{smallmatrix} \omega(t) & 0 \\ 0 & 0 \end{smallmatrix}\right), \quad J = \left(\begin{smallmatrix} 0 & -I \\ I & 0 \end{smallmatrix}\right). $$
Then $H$ and $W$ are $2d\times 2d$ Hermitian matrices with $W(t)\ge 0$ for $t\in[a,b]_\T$, and the system \eqref{nabham}, \eqref{hambc} can be recast in the form
\begin{equation}\label{shieq14}
 J \left(\begin{smallmatrix} x^\nabla(t) \\ u^\nabla(t) \end{smallmatrix}\right) = \left[H(t)+\lambda W(t)\right] \left(\begin{smallmatrix} x(t)  \\ u^\rho(t) \end{smallmatrix}\right).
\end{equation}
Define the natural dynamic nabla differential operator for \eqref{nabham}, \eqref{hambc} via
\begin{equation}\label{shieq15}
 \ell(x,u)(t) = J \left(\begin{smallmatrix} x^\nabla(t) \\ u^\nabla(t) \end{smallmatrix}\right) - H(t) \left(\begin{smallmatrix} x(t)  \\ u^\rho(t) \end{smallmatrix}\right).
\end{equation}
Clearly this includes \eqref{introeq}, \eqref{qbc} as a special case, if we replace $b$ by $\sigma(b)$ as the right-hand endpoint to accommodate $u=Px^\Delta$. As before we introduce the linear space
$$ \ell[\rho(a),b]:=\left\{\left\{(x(t),u(t))\right\}_{t\in[\rho(a),b]_\T}:x(t),u(t)\in\C^d, t\in[\rho(a),b])_\T\right\}, $$
and the weighted inner product
$$ \langle x,y \rangle = \int_{\rho(a)}^{b} y^*(t)\omega(t)x(t)\nabla t, \quad (x,u),(y,v)\in\ell[\rho(a),b]. $$
Then we have the following key result.

\begin{theorem}[Lagrange Identity]
For all $(x,u),(y,v)\in\ell[\rho(a),b]$, we have
$$ \int_{\rho(a)}^{b}\left\{\left(y^*,v^{\rho*}\right)\ell(x,u) - \ell(y,v)^* \left(\begin{smallmatrix} x \\ u^\rho \end{smallmatrix}\right) \right\}(t)\nabla t = \left(y^*(t),v^*(t)\right)J\left(\begin{smallmatrix} x(t) \\ u(t) \end{smallmatrix}\right)\Big|_{\rho(a)}^{b}. $$
\end{theorem}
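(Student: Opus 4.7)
The plan is to substitute the definition \eqref{shieq15} of $\ell$ into the integrand and then exploit the algebraic structure of $H$ and $J$ together with nabla integration by parts. Writing $z=\bigl(\begin{smallmatrix}x\\u\end{smallmatrix}\bigr)$, $w=\bigl(\begin{smallmatrix}y\\v\end{smallmatrix}\bigr)$, and using the short-hands $z_\rho=\bigl(\begin{smallmatrix}x\\u^\rho\end{smallmatrix}\bigr)$, $w_\rho=\bigl(\begin{smallmatrix}y\\v^\rho\end{smallmatrix}\bigr)$, the integrand is
\[
 w_\rho^{*}\bigl(Jz^\nabla-Hz_\rho\bigr)-\bigl(Jw^\nabla-Hw_\rho\bigr)^{*}z_\rho
   \;=\; w_\rho^{*}Jz^\nabla+(w^\nabla)^{*}Jz_\rho,
\]
where the two $H$-terms cancel because $H^{*}=H$ and the sign in the second term flips because $J^{*}=-J$. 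This reduces the problem to showing that the nabla integral of $w_\rho^{*}Jz^\nabla+(w^\nabla)^{*}Jz_\rho$ equals $w^{*}Jz\bigr|_{\rho(a)}^{b}$.

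Next I would compute both pieces explicitly using $J=\bigl(\begin{smallmatrix}0&-I\\I&0\end{smallmatrix}\bigr)$, giving
\[
 w_\rho^{*}Jz^\nabla=-y^{*}u^\nabla+v^{\rho*}x^\nabla,\qquad
 (w^\nabla)^{*}Jz_\rho=-y^{\nabla*}u^\rho+v^{\nabla*}x.
\]
Then I invoke the two equivalent forms of the nabla product rule, namely $(fg)^\nabla=f^\nabla g+f^\rho g^\nabla$ and $(fg)^\nabla=fg^\nabla+f^\nabla g^\rho$, which give after integration from $\rho(a)$ to $b$
\[
 \int_{\rho(a)}^{b}\!\bigl(v^{\nabla*}x+v^{\rho*}x^\nabla\bigr)\nabla t=v^{*}x\bigr|_{\rho(a)}^{b},
 \qquad
 \int_{\rho(a)}^{b}\!\bigl(y^{*}u^\nabla+y^{\nabla*}u^\rho\bigr)\nabla t=y^{*}u\bigr|_{\rho(a)}^{b}.
\]
Subtracting the second from the first collects exactly the four pieces identified above, yielding
\[
 \int_{\rho(a)}^{b}\!\bigl\{w_\rho^{*}Jz^\nabla+(w^\nabla)^{*}Jz_\rho\bigr\}\nabla t
 \;=\;\bigl(v^{*}x-y^{*}u\bigr)\bigr|_{\rho(a)}^{b}
 \;=\;(y^{*},v^{*})J\bigl(\begin{smallmatrix}x\\u\end{smallmatrix}\bigr)\Bigr|_{\rho(a)}^{b},
\]
which is the desired Lagrange identity.

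The only real obstacle is bookkeeping: one has to be careful that the two applications of the product rule use different forms (one producing a $\rho$ on the first factor, one on the second) so that the $\rho$-shifts on $u$ in the term $(w^\nabla)^{*}Jz_\rho$ and on $v$ in the term $w_\rho^{*}Jz^\nabla$ line up with the shifts that the nabla product rule naturally introduces. Once the two product-rule identities are written down with the correct pairing, the rest is bookkeeping that mirrors the isolated-time-scale computation used in Theorem \ref{thm23}; indeed the present statement is the measure-theoretic version of that identity for the Hamiltonian operator \eqref{shieq15}, and the same argument works on an arbitrary time scale because both forms of the nabla product rule hold in that generality.
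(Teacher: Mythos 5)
Your proposal is correct and follows essentially the same route as the paper: expand $\ell$ from \eqref{shieq15}, cancel the $H$-contributions, reduce the integrand to $-y^*u^\nabla+v^{\rho*}x^\nabla+v^{\nabla*}x-y^{\nabla*}u^\rho=(v^*x)^\nabla-(y^*u)^\nabla$ via the two forms of the nabla product rule, and finish with the fundamental theorem of calculus. The only difference is cosmetic: you perform the cancellation at the block level using $H^*=H$ and $J^*=-J$, whereas the paper expands component-wise in $\mathcal{A}$, $\mathcal{B}$, $\mathcal{C}$ and cancels termwise.
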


\begin{proof}
Suppressing the variable $t$, we have
$$ (y^*,v^{\rho*})\ell(x,u) = -y^*u^\nabla+y^*\mathcal{C}x-y^*\mathcal{A}^*u^\rho+v^{\rho*}x^\nabla-v^{\rho*}\mathcal{A}x-v^{\rho*}\mathcal{B}u^\rho, $$
$$ \ell(y,v)^* \left(\begin{smallmatrix} x \\ u^\rho \end{smallmatrix}\right) = -v^{\nabla*}x+y^*\mathcal{C}x-v^{\rho*}\mathcal{A}x+y^{\nabla*}u^\rho-y^*\mathcal{A}^*u^\rho-v^{\rho*}\mathcal{B}u^\rho,  $$
so that when we subtract the second from the first, we obtain
\begin{eqnarray*}
 (y^*,v^{\rho*})\ell(x,u) - \ell(y,v)^* \left(\begin{smallmatrix} x \\ u^\rho \end{smallmatrix}\right) 
 &=& -y^*u^\nabla + v^{\rho*}x^\nabla + v^{\nabla*}x -y^{\nabla*}u^\rho \\
 &=& (v^*x)^\nabla-(y^*u)^\nabla.
\end{eqnarray*}
The result follows from the fundamental theorem of calculus.
\end{proof}

A spectral theory for dynamic linear Hamiltonian nabla systems should now be possible; the interested reader is encouraged to see \cite{shi1} for the details in the discrete case.


\end{document}